\numberwithin{equation}{section}
\newtheorem{theorem}{Theorem}[section]
\newtheorem*{theorem*}{Theorem}
\newtheorem{lemma}[theorem]{Lemma}
\newtheorem{proposition}[theorem]{Proposition}
\newtheorem{corollary}[theorem]{Corollary}
\newtheorem*{conjecture*}{Conjecture}
\newcommand{\ie}{{\em i.e.}\ }
\newcommand{\eg}{{\em e.g.}\ }
\newcommand{\opname}[1]{\operatorname{\mathsf{#1}}}
\renewcommand{\mod}{\opname{mod}\nolimits}
\newcommand{\proj}{\opname{proj}\nolimits}
\newcommand{\Mod}{\opname{Mod}\nolimits}
\newcommand{\Grmod}{\opname{Grmod}\nolimits}
\newcommand{\per}{\opname{per}\nolimits}
\newcommand{\add}{\opname{add}\nolimits}
\newcommand{\Add}{\opname{Add}\nolimits}
\newcommand{\thick}{\opname{thick}\nolimits}
\newcommand{\im}{\opname{im}\nolimits}
\renewcommand{\ker}{\opname{ker}\nolimits}
\newcommand{\id}{\mathrm{id}}
\newcommand{\Hom}{\opname{Hom}}
\newcommand{\cHom}{{\mathcal{H}{\it om}}}
\newcommand{\cEnd}{\mathcal{E}{\it nd}}
\newcommand{\End}{\opname{End}}
\newcommand{\ten}{\otimes}
\newcommand{\ca}{{\mathcal A}}
\newcommand{\cb}{{\mathcal B}}
\newcommand{\cc}{{\mathcal C}}
\newcommand{\cd}{{\mathcal D}}
\newcommand{\cp}{{\mathcal P}}
\newcommand{\cs}{{\mathcal S}}
\newcommand{\cx}{{\mathcal X}}
\begin{document}

\title[From simple-minded collections to silting objects]{From simple-minded collections to silting objects via Koszul duality}

\author{Hao Su}
\address{Hao Su, School of Mathematical Sciences, University of Science and Technology of China, Hefei, Anhui 230026, P. R. China} \email{suhao@mail.ustc.edu.cn}

\author{Dong Yang}
\address{Dong Yang, Department of Mathematics, Nanjing University, Nanjing 210093, PR China}
\email{yangdong@nju.edu.cn}
\date{Last modified on \today}

\begin{abstract} Given an elementary simple-minded collection in the derived category of a non-positive dg algebra with finite-dimensional total cohomology, we construct a silting object via Koszul duality.\\
{\bf MSC 2010 classification:} 18E30, 16E45.\\
{\bf Keywords:} Silting object, simple-minded collection, non-positive dg algebra, positive $A_\infty$-algebra.
\end{abstract}

\maketitle


\section{Introduction}

Projective modules, simple modules and the interaction between them are crucial in the representation theory of finite-dimensional algebras. In the context of triangulated categories, silting objects and simple-minded collections are generalisations of projective modules and simple modules, respectively. Silting objects are `generators' of bounded co-t-structures (\cite{AiharaIyama12,Bondarko10,MendozaSaenzSantiagoSouto10}), and simple-minded collections are `generators' of  bounded t-structures whose hearts satisfy the Jordan--H\"older property (\cite{Al-Nofayee09,KoenigYang14}).

In \cite{Rickard02} (and \cite{RickardRouquier10,KoenigYang14}), Rickard provided a method to construct a silting object of the bounded homotopy category $K^b(\proj\Lambda)$ from a given simple-minded collection in the bounded derived category $\cd^b(\mod\Lambda)$, where $\Lambda$ is a finite-dimensional algebra. In this paper, we provide an alternative approach to Rickard's construction. Our approach works more generally for  non-positive dg algebras with finite-dimensional total cohomology.

\begin{theorem}\label{thm:main-thm-1}
Let $A$ be a non-positive dg algebra over a field $k$ with finite-dimensional total cohomology. For an elementary simple-minded collection $\{X_1,\ldots,X_r\}$ of the finite-dimensional derived category $\cd_{fd}(A)$, there exists a unique (up to isomorphism) silting object $M=M_1\oplus\ldots\oplus M_r$ of the perfect derived category $\per(A)$ such that
for $1\leq i,j\leq r$ and
$p\in\mathbb{Z}$
\[\Hom_{\cd_{fd}(A)}(M_i,\Sigma^p X_j)=\begin{cases} k & \text{if }i=j\text{ and } p=0,\\
                                           0 & \text{otherwise.}
                                           \end{cases}\]
\end{theorem}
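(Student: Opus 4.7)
The plan is to construct the silting object $M$ via Koszul duality applied to the endomorphism $A_\infty$-algebra of $X := X_1\oplus\cdots\oplus X_r$. First, form $B := \RHom_A(X,X)$ and equip it with its minimal $A_\infty$-structure via Kadeishvili's theorem. Because the collection is elementary simple-minded and $A$ is non-positive, $B$ is a \emph{positive} $A_\infty$-algebra with $B^0\cong k^r$ semisimple and finite-dimensional cohomology in each degree. Moreover $X$ naturally inherits the structure of an $A_\infty$ $B$-$A$-bimodule, which is the central object in the Koszul-duality comparison.

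Second, invoke the Koszul-duality formalism for positive $A_\infty$-algebras: the bimodule $X$ induces adjoint functors
\[
F := \RHom_A(X,-)\colon \cd(A)\to \cd(B), \qquad G := -\otimes^L_B X\colon \cd(B)\to \cd(A),
\]
under which the subcategory of $\cd_{fd}(A)$ generated by the $X_j$'s corresponds to the subcategory of $\cd(B)$ generated by the simple $B$-modules $S_j$, with $F(X_j)\cong S_j$. Define $M_j := G(P_j)\in\cd(A)$, where $P_j := e_j B$ is the indecomposable summand of the free $B$-module corresponding to the idempotent $e_j\in B^0$, and set $M := M_1\oplus\cdots\oplus M_r\cong G(B)$. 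Positivity of $B$ together with the finite-dimensionality of each $H^p(B)$ ensures that $G$ sends the compact objects $P_j$ to compact (i.e.\ perfect) $A$-modules, so $M\in\per(A)$.

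Third, verify the properties. The Hom-duality
\[
\Hom_{\cd_{fd}(A)}(M_i,\Sigma^p X_j)\cong \Hom_{\cd(B)}(P_i,\Sigma^p S_j)
\]
follows from the adjunction, and the right-hand side equals $k$ when $i=j$ and $p=0$ and vanishes otherwise, by the standard projective-to-simple pairing for the semisimple augmentation $B^0\cong k^r$. The silting conditions $\Hom_{\per(A)}(M,\Sigma^p M)=0$ for $p>0$ and the generation of $\per(A)$ by $M$ transfer from the analogous properties of the free $B$-module via $G$, using positivity of $B$ and finite-dimensional total cohomology of $A$. Uniqueness is automatic: any $M'_i\in\per(A)$ with the prescribed Hom-pairing with the $X_j$'s is sent by $F$ to an object whose only nonzero $\Hom$ to $\Sigma^p S_j$ is $k$ at $j=i$, $p=0$, forcing $F(M'_i)\cong P_i$ and hence $M'_i\cong M_i$.

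The main obstacle will be making the Koszul-duality dictionary precise in this generality: the collection $\{X_1,\ldots,X_r\}$ is an arbitrary elementary simple-minded collection, not necessarily the simples of $H^0(A)$, so $B$ must be constructed intrinsically as an $A_\infty$-algebra rather than as an ordinary algebra. The key technical points are, first, that $G$ restricts to a fully faithful embedding sending perfect $B$-modules into $\per(A)$, and second, that this embedding is essentially surjective, i.e.\ that $M$ generates all of $\per(A)$. Both rely crucially on the positivity of $B$ and the finite-dimensionality of the total cohomology of $A$; establishing them will require the detailed machinery of positive $A_\infty$-algebras flagged in the paper's keywords.
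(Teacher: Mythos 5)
Your overall strategy — pass to the minimal positive $A_\infty$-endomorphism algebra $B$ of $X=\bigoplus X_i$ and use the adjoint pair $(G,F)$ induced by the bimodule $X$ — is the right one, and is essentially what the paper does (the paper routes through the standard simple-minded collection $\{S_1,\ldots,S_r\}$ and a Morita step, but the end result is the same equivalence onto $\per(\cx)$). However, there is a concrete error in your Koszul-duality dictionary that breaks the construction. Since $B = \RHom_A(X,X)$ and $F = \RHom_A(X,-)$, we have $F(X) \cong B$ as a $B$-module, so $F(X_j)$ is the direct summand $e_jB = P_j$ of $B$, i.e.\ the \emph{projective}, \textbf{not} the simple $S_j$. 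Consequently $M_j := G(P_j) = P_j \otimes^L_B X \cong e_j X = X_j$, and your proposed $M$ is just the original simple-minded collection $X$, which is neither perfect nor a silting object in general (for instance, $\Hom(M_i,\Sigma^p X_j) = \Hom(X_i,\Sigma^p X_j)$ is typically nonzero for $p>0$). The adjunction identity you write then reads $\Hom(G(P_i),\Sigma^p X_j)\cong\Hom(P_i,\Sigma^pF(X_j))=\Hom(P_i,\Sigma^p P_j)=H^p(e_jBe_i)$, which does not vanish for $p>0$ when $B$ has nontrivial positive part.

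The correct recipe, which the paper carries out, is the opposite one: take $G$ of the \emph{simple} $B$-modules $R_j = e_jB/e_j\ker(\varepsilon)$. But there is a second issue you will then face: $G(R_j)$ lands in $\thick(D({}_AA))\subseteq\cd_{fd}(A)$, not in $\per(A)$, because under the Koszul-duality equivalence $\cd_{fd}(A)\to\per(B)$ the subcategory $\thick(D({}_AA))$ corresponds to $\cd_{fd}(B)$ (this is Lemma~\ref{l:positive-a-infty-and-silting}: the sum of simples over a positive $A_\infty$-algebra is a silting object of $\cd_{fd}$, and $\cd_{fd}(B)\subseteq\per(B)$). To pass from $\thick(D({}_AA))$ to $\per(A)$ you must apply the inverse Nakayama functor $\nu^{-1}$, which is a triangle equivalence $\thick(D({}_AA))\to\per(A)$ precisely because $A$ has finite-dimensional total cohomology. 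This is the essential point your proof misses — the finite-dimensionality hypothesis enters not merely as a finiteness technicality but to guarantee the existence of $\nu$; the desired Hom-formula then falls out of the Auslander--Reiten duality $D\Hom(M,N)\cong\Hom(N,\nu M)$.
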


The key idea of the construction of $M$ is to use the triangulated version of Koszul duality (\cite{Keller94,Lefevre03,LuPalmieriWuZhang08}). On the Koszul dual side we use $A_\infty$-algebras instead of dg algebras. One reason is that in this way the simple modules are easily obtained because the involved $A_\infty$-algebras are naturally augmented.

As a consequence of Theorem~\ref{thm:main-thm-1}, we obtain the following theorem, generalising \cite[Theorems 6.1 and 7.12]{KoenigYang14}.

\begin{theorem} Let $A$ be a 
non-positive dg algebra over an algebraically closed field with finite-dimensional total cohomology.
There are one-to-one correspondences which commute with mutations and which preserve partial orders
between
\begin{itemize}
 \item[(1)] equivalence classes of silting objects in $\per(A)$,
 \item[(2)] isomorphism classes of simple-minded collections in $\cd_{fd}(A)$,
 \item[(3)] bounded $t$-structures of $\cd_{fd}(A)$ with length heart,
 \item[(4)] bounded co-$t$-structures of $\per(A)$.
\end{itemize}
\end{theorem}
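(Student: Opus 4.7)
The plan is to follow the strategy of Koenig--Yang (Theorems~6.1 and~7.12 of \cite{KoenigYang14}), substituting Rickard's construction by Theorem~\ref{thm:main-thm-1}. The outer equivalences (1)$\leftrightarrow$(4) and (2)$\leftrightarrow$(3) I would treat as essentially formal: to a silting object $M$ of $\per(A)$ one attaches the bounded co-$t$-structure whose aisles are generated by $\add\Sigma^{\geq 0}M$ and $\add\Sigma^{\leq 0}M$, recovering the silting object as an $\add$-generator of the co-heart (following Aihara--Iyama, Bondarko, Mendoza--S\'aenz--Santiago--Souto); to a simple-minded collection one attaches the unique bounded $t$-structure on $\cd_{fd}(A)$ whose heart is its extension-closure (following Al-Nofayee, Koenig--Yang). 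The arguments in the finite-dimensional algebra setting depend only on formal properties of $\per$ and $\cd_{fd}$ that persist for a non-positive dg algebra with finite-dimensional total cohomology, so they transfer essentially verbatim.

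The heart of the proof is the bijection (1)$\leftrightarrow$(2). In direction (2)$\to$(1), I would invoke Theorem~\ref{thm:main-thm-1}: since $k$ is algebraically closed, for every simple-minded object $X$ the ring $\End_{\cd_{fd}(A)}(X)$ is a finite-dimensional division $k$-algebra, hence $k$ itself, so every simple-minded collection is automatically elementary and the main theorem produces a silting object. For the opposite direction, starting from a basic silting object $M=M_1\oplus\cdots\oplus M_r$ I would form the non-positive dg endomorphism algebra $B=\RHom_A(M,M)$, use the Morita-type equivalence $\per(B)\simeq\per(A)$ sending $B\mapsto M$ (together with its restriction to finite-dimensional derived categories) to transport the simple $H^0(B)$-modules back to $\cd_{fd}(A)$, and check that the resulting collection $\{X_1,\ldots,X_r\}$ is simple-minded. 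The Hom-orthogonality in Theorem~\ref{thm:main-thm-1} characterises the pair $(\{X_i\},M)$ uniquely on both sides, which is what forces the two assignments to be mutually inverse.

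Compatibility with mutations and partial orders would then be checked through the same orthogonality. Silting mutation (Aihara--Iyama) and simple-minded mutation (Koenig--Yang, Rickard--Rouquier) are both defined by approximation triangles against a single summand, and a left silting mutation at $M_i$ should match a right simple-minded mutation at $X_i$ because the orthogonality relations rewrite transparently under the mutation triangles. The partial orders ($M\geq N$ iff $\Hom_{\per(A)}(M,\Sigma^{>0}N)=0$ on the silting side, and containment of aisles of the associated $t$-structures on the simple-minded side) also read off directly from the orthogonality of Theorem~\ref{thm:main-thm-1}. The main technical obstacle I anticipate is the inverse identification inside (1)$\leftrightarrow$(2): showing that applying Theorem~\ref{thm:main-thm-1} to the pulled-back simples of $H^0(B)$ recovers $M$ itself up to isomorphism. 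This requires chasing through two layers of duality --- the Morita equivalence $\per(B)\simeq\per(A)$ and the Koszul duality behind the main theorem --- and is the point at which non-positivity of $A$ is genuinely used.
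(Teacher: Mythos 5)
Your proposal follows the same route as the paper: define (1)$\leftrightarrow$(4) and (2)$\leftrightarrow$(3) as in Koenig--Yang, get (2)$\to$(1) from Theorem~\ref{thm:main-thm-1} (noting that algebraic closedness forces every simple-minded collection to be elementary), get (1)$\to$(2) by passing to the (truncated) non-positive dg endomorphism algebra of a silting object and pulling back the simple $H^0$-modules, and let the uniqueness clause of Theorem~\ref{thm:main-thm-1} establish mutual inversity; compatibility with mutations and partial orders is, as in the paper, delegated to the arguments of Koenig--Yang. One small precision worth noting: for the inverse direction you should use the \emph{truncated} dg endomorphism algebra $\sigma^{\leq 0}\cEnd_A(M)$ of a $K$-projective model of $M$ rather than $\RHom_A(M,M)$ directly, since only the truncation is literally a non-positive dg algebra to which the standard-simple machinery applies.
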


A similar result is claimed by Keller and Nicol\'as in an unpublished manuscript, with $A$ replaced by a homologically smooth non-positive dg algebra with finite-dimensional cohomology in each degree.

The paper is structured as follows. In Section~\ref{s:silting-and-non-positive-dg-alg}, we recall the basics on silting objects and non-positive dg algebras. In Section~\ref{s:A-infinite-alg}, we recall the basics on $A_\infty$-algebras and $A_\infty$-modules. In Section~\ref{s:smc-and-positive-A-infinite-alg}, we recall the definition of simple-minded collections and study strictly unital minimal positive $A_\infty$-algebras, which are closely related to simple-minded collections. In Section~\ref{s:construction} we provide the construction of a silting object from a given simple-minded collection. 

Throughout the paper, let $k$ be a field and let $D=\Hom_k(?,k)$ be the $k$-dual.  Without further remark, modules will be right modules and all categories are $k$-categories.
\medskip

\noindent{Acknowledgement.} The first-named author is deeply indebted to the second-named author for his guidance and help. The second-named author would like to thank Mark Blume for valuable remarks on an earlier version. He acknowledges support from the National Science Foundation in China No. 11401297 and a project funded by the Priority Academic Program Development of Jiangsu Higher Education Institutions.

\section{Silting objects and non-positive dg algebras}\label{s:silting-and-non-positive-dg-alg}

In this section we recall the definitions and standard facts on silting objects and non-positive dg algebras.

\subsection{Silting objects}
Let $\cc$ be a triangulated category with suspension functor $\Sigma$. For a set $\cs$ of objects of $\cc$, let $\add(\cs)=\add_\cc(\cs)$ be the smallest full subcategory of $\cc$ containing $\cs$ and closed under taking direct summands and finite direct sums, and let $\thick(\cs)=\thick_\cc(\cs)$ denote the thick subcategory of $\cc$ generated by $\cs$, \ie the smallest full subcategory of $\cc$ that is closed under taking suspensions, cosuspensions, extensions and direct summands.

An object $M$ of $\cc$ is a \emph{silting object} of $\cc$ if
\begin{itemize}
\item[-]
$\Hom_{\cc}(M,\Sigma^p M)=0$ for any $p>0$,
\item[-]
$\cc=\thick(M)$.
\end{itemize}
Two silting objects $M$ and $M'$ of $\cc$ are said to be \emph{equivalent} if $\add(M)=\add(M')$.

\subsection{Non-positive dg algebras}

 Let $A$ be a dg $k$-algebra. Denote by $\cd(A)$ the derived category of (right) dg $A$-modules (see \cite{Keller94,Keller06d}), which is a triangulated category with suspension functor $\Sigma$ the shift functor. Let $\per(A)=\thick(A_A)$, the thick subcategory of $\cd(A)$ generated by $A_A$, the free dg $A$-module of rank $1$, and let $\cd_{fd}(A)$ be the full subcategory of $\cd(A)$ consisting of dg $A$-modules whose total cohomology is finite-dimensional. If $A$ is a finite-dimensional $k$-algebra, then a dg $A$-module is exactly a complex of $A$-modules. So $\cd(A)=\cd(\Mod A)$ and we have canonical triangle equivalences $K^b(\proj A)\to \per(A)$ and $\cd^b(\mod A)\to \cd_{fd}(A)$.
For two dg $A$-modules $M$ and $N$,  let $\cHom_A(M,N)$ denote the
complex whose degree $p$ component consists of those $A$-linear
maps from $M$ to $N$ which are homogeneous of degree $p$, and whose
differential  takes $f$ to \mbox{$d_N\circ
f-(-1)^{|f|} f\circ d_M$,} where $f$ is homogeneous of degree $|f|$.

A dg $A$-module $M$ is said to be \emph{$K$-projective} if $\cHom_A(M,N)$ is acyclic for any dg $A$-module $N$ which is acyclic. Let $M$ and $N$ be dg $A$-modules. If $M$ is $K$-projective, then there is a canonical isomorphism
\[
\Hom_{\cd(A)}(M,N)\cong H^0\cHom_A(M,N).
\]
By \cite[Theorem 3.1]{Keller94}, for any dg $A$-module $M$ there is a quasi-isomorphism $\mathbf{p}M\to M$ of dg $A$-modules with $\mathbf{p}M$ being $K$-projective.

\smallskip

We say that $A$ is \emph{non-positive} if $A^p=0$ vanishes for all $p>0$.
A triangulated category is said to be \emph{algebraic} if it is triangle equivalent to the stable category of a Frobenius category.

\begin{lemma}[{\cite[Lemma 4.1]{KoenigYang14}}]\label{l:negative-dg-and-silting}
\begin{itemize}
\item[(a)]
 Let $A$ be a non-positive dg $k$-algebra. The free dg $A$-module of rank $1$ is a silting object of $\per(A)$.
\item[(b{)}]
 Let $\cc$ be an idempotent complete algebraic triangulated category
and let $M\in\cc$ be a silting object. Then there is a non-positive dg
$k$-algebra $A$ together with a triangle equivalence
$\per(A)\stackrel{\simeq}{\longrightarrow} \cc$ which takes $A$ to $M$.
\end{itemize}
\end{lemma}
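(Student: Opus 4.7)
Part (a) is immediate from the definitions: one has $\Hom_{\cd(A)}(A,\Sigma^p A)\cong H^p(A)$, so the non-positivity of $A$ forces $\Hom_{\cd(A)}(A,\Sigma^p A)=0$ for every $p>0$; and $\per(A)=\thick(A)$ is the definition of $\per(A)$. I would simply record this one-line computation.

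For part (b), the plan is to realise the desired dg algebra as the canonical truncation of a dg endomorphism algebra inside a dg enhancement of $\cc$. First I will use the hypotheses that $\cc$ is algebraic and idempotent complete to fix a small pretriangulated dg category $\widetilde{\cc}$ with a triangle equivalence $H^0(\widetilde{\cc})\simeq\cc$; such an enhancement is provided, for example, by a Frobenius model of $\cc$ as in \cite{Keller94}. Then I will lift $M$ to an object $\widetilde{M}\in\widetilde{\cc}$ and consider the dg endomorphism algebra
\[
B:=\cHom_{\widetilde{\cc}}(\widetilde{M},\widetilde{M}).
\]
Its cohomology satisfies $H^p(B)\cong\Hom_\cc(M,\Sigma^p M)$, so the silting hypothesis forces $H^p(B)=0$ for all $p>0$.

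The next step is to truncate. Set $A:=\tau_{\leq 0}B$, defined by $A^p=B^p$ for $p<0$, $A^0=\ker(d^0_B)$ and $A^p=0$ for $p>0$. A routine verification shows that $A$ is closed under multiplication and the differential, hence is a dg subalgebra of $B$, and the inclusion $A\hookrightarrow B$ is a quasi-isomorphism because $H^{>0}(B)$ already vanishes. Thus $A$ is a non-positive dg algebra, and the standard fact that a quasi-isomorphism of dg algebras induces a triangle equivalence of derived categories yields $\per(A)\stackrel{\simeq}{\longrightarrow}\per(B)$ sending $A$ to $B$.

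Finally, I will invoke the Morita-type recognition theorem for dg algebras: the functor $\cHom_{\widetilde{\cc}}(\widetilde{M},-)$ induces a triangle functor $H^0(\widetilde{\cc})\to\cd(B)$ taking $\widetilde{M}$ to $B$, and since $\widetilde{M}$ is compact with $\thick_\cc(M)=\cc$ (by the silting hypothesis), its restriction is a triangle equivalence $\per(B)\stackrel{\simeq}{\longrightarrow}\cc$ sending $B\mapsto M$. Composing the two equivalences delivers the statement. The main obstacle I anticipate is not the truncation step, which is formal, but the setup of the dg enhancement and the accompanying Morita equivalence: one must fix the lift $\widetilde{M}$ so that the relevant functor to $\cd(B)$ is well defined and verify that the resulting equivalence respects the triangulated structure and the distinguished object. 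This is standard algebraic-enhancement machinery (see \cite{Keller94}), but it is the technical heart of the argument.
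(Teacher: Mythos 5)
The paper does not supply a proof of this lemma: it is quoted verbatim from \cite[Lemma 4.1]{KoenigYang14}, so there is no internal argument to compare against. That said, your proposal is correct and is exactly the standard argument behind that cited result. Part (a) is indeed the one-line computation $\Hom_{\cd(A)}(A,\Sigma^pA)\cong H^p(A)$ together with $\per(A)=\thick(A)$ by definition. For part (b), the chain ``algebraic $\Rightarrow$ dg enhancement $\Rightarrow$ dg endomorphism algebra $B$ of a lift of $M$ $\Rightarrow$ $H^{>0}(B)=0$ by the silting condition $\Rightarrow$ replace $B$ by its smart truncation $\sigma^{\leq 0}B$ (the inclusion being a quasi-isomorphism) $\Rightarrow$ Keller's Morita theorem gives $\per(B)\simeq\cc$, $B\mapsto M$, using idempotent completeness to identify the image with all of $\cc$'' is precisely how the lemma is established, and your verification that $\sigma^{\leq 0}B$ is a dg subalgebra and that the inclusion is a quasi-isomorphism is sound. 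It is worth noting that the paper itself runs this very truncation construction in Section~\ref{ss:non-positive-dg-alg} (the ``truncated dg endomorphism algebra'' $\sigma^{\leq 0}\cEnd_A(M)$ of a silting object), so your approach is not only the one in \cite{KoenigYang14} but also the one the present paper relies on implicitly. The only point you flagged as a potential obstacle --- setting up the enhancement and the Morita equivalence --- is indeed where the genuine technical content lies, and the reference to \cite[Theorem 4.3]{Keller94} (and its variant for idempotent complete categories) is the right one to invoke there.
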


\subsection{Cohomologically finite-dimensional non-positive dg algebras}
\label{ss:non-positive-dg-alg}

Let $A$ be a non-positive dg $k$-algebra whose total cohomology is finite-dimensional over $k$. Then both $\cd_{fd}(A)$ and $\per(A)$ are Krull--Schmidt, since they are Hom-finite, see for example \cite[Proposition 4.2 (a)]{AdachiMizunoYang17}. Moreover,  $\per(A)\subseteq \cd_{fd}(A)$ and $\thick(D({}_AA))\subseteq\cd_{fd}(A)$. There is a triangle functor $\nu: \cd(A)\to\cd(A)$ (called the \emph{Nakayama functor}) which restricts to a triangle equivalence $\nu:\per(A)\to\thick(D({}_AA))$. We have the Auslander--Reiten formula
\[D\Hom(M,N)\cong \Hom(N,\nu(M))
\]
for $M\in\per(A)$ and $N\in\cd(A)$. See \cite[Section 10]{Keller94}.

Recall from the preceding subsection that $A_A$ is a silting object of $\per(A)$. Let $M$ be a silting object of $\per(A)$. We may assume that $M$ is $K$-projective and form the dg endomorphism algebra $\cEnd_A(M):=\cHom_A(M,M)$. Then by \cite[Lemma 6.1]{Keller94}, we have a derived equivalence $\cd(\cEnd_A(M))\to\cd(A)$ taking $\cEnd_A(M)$ to $M$. Since $H^p\cEnd_A(M)=\Hom_{\cd(A)}(M,\Sigma^p M)$, it follows that $\cEnd_A(M)$ has finite-dimensional total cohomology and $H^p\cEnd_A(M)=0$ for all $p>0$. The subcomplex $\sigma^{\leq 0}\cEnd_A(M)$, where $\sigma^{\leq 0}$ is the standard truncation in degree $0$, is a dg subalgebra of $\cEnd_A(M)$. In particular, it is a non-positive dg algebra with finite-dimensional total cohomology. Moreover, the canonical embedding $\sigma^{\leq 0}\cEnd_A(M)\to \cEnd_A(M)$ is a quasi-isomorphism of dg algebras, inducing a derived equivalence
\[
\cd(\sigma^{\leq 0}\cEnd_A(M))\to \cd(A)
\]
taking $\sigma^{\leq 0}\cEnd_A(M)$ to $M$. We call $\sigma^{\leq 0}\cEnd_A(M)$ the \emph{truncated dg endomorphism algebra} of $M$.

\section{$A_\infty$-algebras and $A_\infty$-modules}\label{s:A-infinite-alg}

In this section we recall the definition and basic
properties of $A_\infty$-algebras and $A_\infty$-modules.
We follow~\cite{Lefevre03} and also refer to \cite{Keller01} and
\cite{LuPalmieriWuZhang08}.

\subsection{$A_\infty$-algebras}
Let $R$ be a separable semi-simple $k$-algebra. An \emph{$A_\infty$-algebra} $A$ over $R$ is a graded $R$-bimodule endowed with a family of homogeneous $R$-bilinear maps $m_n:A^{\ten_R
n}\rightarrow A ~~(n\geq 1)$ of degree $2-n$, called the \emph{multiplications} of $A$, which satisfy the following identities
\begin{eqnarray}\label{e:associativity-of-multiplication}
\sum_{i+j+l=n}(-1)^{ij+l}m_{i+1+l}(\id^{\ten i}\ten m_j\ten \id^{\ten l})=0, n\geq 1.
\end{eqnarray}
Here $i,l\geq 0$ and $j\geq 1$. (In \cite[D\'efinition 1.2.1.1]{Lefevre03}, $A_\infty$-algebras are defined over a monoidal category $\mathbf{C}$. The above definition is obtained by taking $\mathbf{C}$ as the category of $R$-bimodules.)
We are mainly interested in the case when $R$ is a finite direct product of copies of $k$, which we use `as if it were non-commutative', namely, we do not require that the left and right graded $R$-module structures on $A$ coincide (compare \eg \cite[Section 2.1]{Lunts10}).  Let $A$ be an $A_\infty$-algebra over $R$. $A$ is said to be \emph{strictly unital} if there is a $R$-bilinear map $\eta: R\to A$ (called the \emph{unit} of $A$) which is homogeneous of degree $0$ such that $m_n(\id\ten\cdots\ten \id\ten \eta\ten \id\ten \cdots\ten \id)=0$ for $n\neq 2$ (here $\eta$ can occur in any position in the tensor product) and $m_2(\id\ten\eta)=m_2(\eta\ten \id)=\id$.
Note that the identity \eqref{e:associativity-of-multiplication} for $n=1$ is $m_1^2=0$, thus $A$ is a complex of $R$-bimodules  with differential $m_1$. $A$ is said to be \emph{minimal} if $m_1=0$. In this case, $A$ is a graded algebra over $R$ with $m_2$ as multiplication.

Let $A$ and $B$ be two strictly unital $A_\infty$-algebras  over $R$. A \emph{strictly unital $A_\infty$-morphism} $f:A\rightarrow B$ of strictly unital $A_\infty$-algebras is a family of homogeneous $R$-bilinear maps $f_n: A^{\ten_R n}\rightarrow B ~~(n\geq 1)$ of degree $1-n$,  such that $f_1\eta_A=\eta_B$, $f_n(\id\ten\cdots \ten \id\ten\eta_A\ten \id\ten \cdots\ten \id)=0$ for all $n\geq 2$, and that
\begin{eqnarray}\label{e:associativity-of-morphisms}
     \sum_{i+j+l=n}(-1)^{ij+l}f_{i+1+l}(\id^{\otimes i} \otimes m_{j} \otimes\id^{\otimes l})
   = \sum_{\substack{1\leq p \leq n \\ i_{1}+\ldots+i_{p}=n}} (-1)^{\omega}m_{p}(f_{i_{1}} \otimes \cdots \otimes f_{i_{p}}),n \geq 1.
\end{eqnarray}
Here $j\geq 1;i,l \geq 0$ and $\omega = \sum_{2\leq u\leq p}(1-i_u)\sum_{1\leq v\leq u}i_v$.
It follows that $f_1$ is a chain map with respect to the differentials $m_1$. If $f_1$ is a quasi-isomorphism of complexes, we say that $f$ is an   \emph{$A_\infty$-quasi-isomorphism}.
If $f_n=0$ for $n\geq 2$, then the above identities amounts to saying that $f_1:A\rightarrow B$ commutes with all multiplications $m_n$. In this case, we say that $f$ is \emph{strict} and identify $f$ with $f_1$.

Let $A$ be a strictly unital $A_\infty$-algebra over $R$. $A$ is said to be \emph{augmented} if there is a strict $A_\infty$-morphism $\varepsilon: A\to R$ of strictly unital $A_\infty$-algebras, which is called the \emph{augmentation} of $A$. Here we view $R$ as a strictly unital $A_\infty$-algebra over $R$ with $m_2=\id$, $m_n=0$ for $n\neq 2$ and $\eta=\id$. Let $A$ and $B$ be two augmented $A_\infty$-algebras over $R$. An \emph{$A_\infty$-morphism} $f: A\to B$ of augmented $A_\infty$-algebras is a strictly unital $A_\infty$-morphism of strictly unital $A_\infty$-algebras such that $\varepsilon_B f_1=\varepsilon_A$.

\subsection{Enveloping dg algebras}

A dg algebra $A$ over $R$ is a dg $k$-algebra together with a homomorphism $\eta:R\to A$ of dg $k$-algebras.  It can be considered as a strictly unital $A_\infty$-algebra over $R$ with $m_1$ being the differential, $m_2$ being the multiplication and $m_n=0$ for $n\geq 3$.

\begin{theorem}[{\cite[Proposition 7.5.0.2]{Lefevre03} and \cite[Lemme 2.3.4.3]{Lefevre03}}]\label{t:enveloping-algebra}
Let $A$ be a strictly unital $A_\infty$-algebra over $R$. Then there is a dg algebra $A'$ (called a \emph{dg model} of $A$) over $R$ with a strictly unital $A_\infty$-quasi-isomorphism $A \to A'$. If $A$ is augmented over $R$, then $A'$ can be taken augmented over $R$ (in this case $A'$ is called the \emph{enveloping dg algebra of $A$}) and the $A_\infty$-quasi-isomorphism above is an $A_\infty$-quasi-isomorphism of augmented $A_\infty$-algebras over $R$.
\end{theorem}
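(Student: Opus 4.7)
The plan is to produce $A'$ by a bar-cobar (Koszul) resolution. I handle the augmented case first, where the construction is cleanest, and then reduce the strictly unital case to it by normalization.

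Assume first that $A$ is augmented with augmentation $\eps: A\to R$ and augmentation ideal $\bar A := \ker(\eps)$, so $A = R\oplus \bar A$ as graded $R$-bimodules. Strict unitality forces each $m_n$ with $n\neq 2$ to vanish on any tensor containing a factor from $\eta(R)$, so the $m_n$ restrict to $R$-bilinear maps on $\bar A^{\ten n}$. Form the reduced bar construction $BA := T^c(s\bar A)$, the tensor coalgebra on the suspension of $\bar A$, with the unique coderivation $b$ whose projection to cogenerators is $\sum_{n\geq 1} s\,m_n\,s^{-\ten n}$. The identities \eqref{e:associativity-of-multiplication} are precisely equivalent to $b^2 = 0$, so $(BA, b)$ is a coaugmented dg $R$-coalgebra. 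Next, set $A' := \Omega BA = T(s^{-1}\overline{BA})$, the cobar construction of $BA$, with its standard differential $d_\Omega$. Then $A'$ is a dg $R$-algebra, augmented by the projection onto $R$.

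To construct the $A_\infty$-morphism $\varphi: A\to A'$, consider the tautological degree-$1$ composite $\tau: BA\twoheadrightarrow s\bar A\xrightarrow{s^{-1}}\bar A\hookrightarrow A'$. A direct computation shows $\tau$ is a twisting cochain, i.e.\ it satisfies the Maurer--Cartan equation in the convolution dg algebra $\Hom(BA, A')$. The universal property of the bar construction translates such a twisting cochain into a strictly unital augmented $A_\infty$-morphism $\varphi: A\to A'$ whose identities \eqref{e:associativity-of-morphisms} are exactly the Maurer--Cartan equation for $\tau$; its first component is the inclusion $A = R\oplus\bar A\hookrightarrow R\oplus\overline{A'} = A'$.

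The main obstacle is to show that $\varphi$ is a quasi-isomorphism, i.e.\ that $\varphi_1$ is a quasi-isomorphism of chain complexes. Equivalently, the associated dg coalgebra morphism $B\varphi: BA\to BA' = B\Omega BA$ should be a quasi-isomorphism; this is the standard bar-cobar acyclicity theorem. One filters $B\Omega BA$ by total tensor length and passes to the associated bicomplex; the classical contracting homotopy on the normalized bar complex, built from the strict unit $\eta$, forces the $E_1$-page to collapse to $BA$, yielding the desired quasi-isomorphism. The detailed argument is carried out in \cite[Ch.~1--2]{Lefevre03}. Finally, if $A$ is only strictly unital, I replace $\bar A$ by the quotient $R$-bimodule $A/\eta(R)$, on which the $m_n$ still descend by strict unitality, and set $A' := \Omega T^c(s\bar A)$; this is an augmented dg $R$-algebra. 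The same argument produces a strictly unital $A_\infty$-quasi-isomorphism $A\to A'$, which coincides with the augmented construction when $A$ itself is augmented and is an $A_\infty$-morphism of augmented $A_\infty$-algebras in that case. This establishes both assertions of the theorem.
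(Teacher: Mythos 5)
The paper does not prove this result; it imports it by citation from Lef\`evre-Hasegawa's thesis, so there is no in-paper proof to compare against. Your treatment of the augmented case is, in outline, the same as the cited Lemme 2.3.4.3: take the cobar-bar resolution $A' = \Omega BA$ and produce the $A_\infty$-morphism from the universal twisting cochain. Two minor cautions there: the universal twisting cochain is the full composite $BA\twoheadrightarrow\overline{BA}\xrightarrow{s^{-1}}s^{-1}\overline{BA}\hookrightarrow\Omega BA$, with components in every tensor length, not just the projection onto $s\bar{A}$; and the assertion that $\varphi_1$ is a quasi-isomorphism is not a reformulation of the acyclicity theorem but its content, requiring conilpotence of $BA$ and the filtration argument you allude to.

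The serious gap is in your reduction of the strictly unital but non-augmented case. You claim that the $m_n$ descend to the quotient $\bar{A}:=A/\eta(R)$. That is true for $n\neq 2$, since $m_n(\cdots\ten\eta(r)\ten\cdots)=0$ by strict unitality, so changing a representative by an element of $\eta(R)$ does not change $m_n$. It is false for $m_2$: unitality gives $m_2(\eta(r)\ten b)=rb$ and $m_2(a\ten\eta(s))=as$, and these elements need not lie in $\eta(R)$. Consequently the class $[m_2(a\ten b)]\in\bar{A}$ depends on the chosen lifts of $[a]$ and $[b]$, so $T^c(s\bar{A})$ does not inherit a coderivation and the proposed $A'$ is simply not defined. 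Moreover, in the non-augmented situation there is no $R$-bimodule splitting $A=R\oplus\bar{A}$ built into the data, so even the description of $\varphi_1$ as an inclusion breaks down. The non-augmented case in the cited Proposition 7.5.0.2 is handled by a different device (exploiting the separability of $R$, which guarantees an $R$-bimodule complement of $\eta(R)$ inside $A$ and lets one modify the structure to reduce to the augmented setting, or equivalently a strictly unital variant of the bar construction); your argument needs an input of that kind rather than the naive quotient.
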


\subsection{$A_\infty$-modules}
Let $A$ be a strictly unital $A_\infty$-algebra over $R$. A \emph{(right)
$A_\infty$-module} over $A$ is a graded right $R$-module $M$ endowed
with a family of homogeneous $R$-linear maps $m_n^M:M\ten_R A^{\ten_R n-1}\longrightarrow M~~(n\geq 1)$
of degree $2-n$ such that (some of the superscripts $M$ on $m$ are
omitted)
\begin{eqnarray}\label{e:associativity-for-modules}
\sum_{i+j+l=n}(-1)^{ij+l}m_{i+1+l}(\id^{\ten i}\ten m_j\ten \id^{\ten l})=0.
\end{eqnarray}
Here $i,l\geq 0$ and $j\geq 1$. The graded $R$-module $M$ equipped
with $m_1^M$ becomes a complex. An $A_\infty$-module $M$ is said to
be \emph{minimal} if $m_1^M=0$. It is said to be \emph{strictly unital}
if $m_n^M(\id_M\ten \id\ten
\cdots\ten\id\ten \eta\ten \id\ten\cdots\ten \id)=0$ for all $n\geq 3$, and $m_2^M(\id_M\ten \eta)=\id_M$. If $M'$ is a graded $R$-submodule of $M$ such that $m^{M}_{n}$ restricts to $M'$ for all $n \geq 1$, then $M'$ together with the restriction of $m_n^M$ is called a \emph{submodule} of $M$. $A$ together with its multiplications is an $A_\infty$-module over $A$. An element $e$ of $A$ is called a \emph{strict idempotent} if $e\in\im(\eta)$, $m_2(e\ten e)=e$ and for all $n\neq 2$ we have $m_n(a_1\ten\cdots\ten a_n)=0$ if one of $a_1,\ldots,a_n$ is $e$. If $e$ is a strict idempotent of $A$, then $eA=\{ea:=m_2(e\ten a)|a\in A\}$ is an $A_\infty$-submodule of $A$, because
\[m_n(e a_1\ten a_2\ten\cdots\ten a_n)=e m_n(a_1\ten a_2\ten\cdots\ten a_n).\]

Let $M$ and $M'$ be two strictly unital $A_\infty$-modules over $A$.
A \emph{strictly unital $A_\infty$-morphism} $f:M\rightarrow M'$ is a family of
homogeneous $R$-linear maps $f_n:M\ten_R A^{\ten_R n-1}\longrightarrow M' ~~(n\geq
1)$ of degree $1-n$ such that $f_n(\id_M\ten\id\ten\cdots\ten\id\ten\eta\ten\id\ten\cdots\ten\id)=0$ for all $n\geq 2$ and that  the following identity holds for all $n\geq
1$
\begin{eqnarray}\label{e:associativity-for-morphisms-of-modules}
\sum_{i+j+l=n}(-1)^{ij+l}f_{i+1+l}(\id^{\ten i}\ten m_j\ten \id^{\ten l})=\sum_{s+t=n} m_{1+t}(f_s\ten \id^{\ten
t}).
\end{eqnarray}
Here $i,l,t\geq 0$ and $j,s\geq 1$. In particular, $f_1$ is a chain
map of complexes. $f$ is an
\emph{$A_\infty$-quasi-isomorphism} if $f_1$ induces identities on all
cohomologies. $f$ is \emph{strict} if $f_n=0$ for all $n\geq
2$. We will identify $f$ with $f_1$ in this case.

\begin{proposition}\label{p:minimal-model}\emph{(\cite[Proposition 3.3.1.7]{Lefevre03})}
Let $A$ be a strictly unital $A_\infty$-algebra over $R$ and $M$ be a strictly unital $A_\infty$-module over $A$. Then there is a strictly
unital minimal $A_\infty$-module over $A$ which is $A_\infty$-quasi-isomorphic to
$M$.
\end{proposition}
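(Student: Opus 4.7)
The plan is to carry out the Kadeishvili-style homotopy transfer of structure, adapted from $A_\infty$-algebras to $A_\infty$-modules. Since $R$ is a separable semisimple $k$-algebra, every complex of graded $R$-bimodules is contractible onto its cohomology. So first I would choose, for the complex $(M,m_1^M)$, a graded $R$-linear deformation retract onto $H^*(M)$ (equipped with zero differential): a projection $p\colon M\to H^*(M)$, a section $i\colon H^*(M)\to M$, and a homotopy $h\colon M\to M$ of degree $-1$ satisfying
\[
pi=\id,\qquad \id_M-ip=m_1^M h+h\, m_1^M,\qquad ph=0,\ hi=0,\ h^2=0.
\]
Such side conditions can always be arranged. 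I would also arrange, using strict unitality of $M$, that $h$ vanishes on any tensor containing $\eta$ in an $A$-slot and that $i,p$ respect the strictly unital direct summand; this will be needed at the end.

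Next, I would define the higher multiplications $m_n^{H^*(M)}\colon H^*(M)\ten_R A^{\ten_R n-1}\to H^*(M)$ and an $A_\infty$-quasi-isomorphism $f\colon H^*(M)\to M$ by the standard sum-over-trees formulas (Kontsevich--Soibelman), suitably asymmetrised to the module setting. Concretely, one sums over planar rooted trees whose leftmost leaf carries an $H^*(M)$-input and whose other leaves carry $A$-inputs: internal vertices are labelled by $m_k^M$ or $m_k^A$ (the first slot of any module vertex lies on the leftmost branch), the leftmost leaf edge by $i$, the remaining leaf edges by $\id$, internal edges by $h$, and the root edge by $p$ (for $m_n^{H^*(M)}$) or by $h$ (for $f_n$); we set $f_1=i$. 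For example,
\[
m_2^{H^*(M)}(x\ten a)=p\, m_2^M(i(x)\ten a),
\]
and $m_3^{H^*(M)}$ is a sum of three tree-terms involving $m_3^M$, $m_2^M\circ(m_2^M\ten\id)$ with a $h$ on an $M$-edge, and $m_2^M\circ(\id\ten m_2^A)$.

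Then I would verify the $A_\infty$-module identities \eqref{e:associativity-for-modules} for $m_n^{H^*(M)}$ and the morphism identities \eqref{e:associativity-for-morphisms-of-modules} for $f$. This is the combinatorial heart of the proof: one expands the left-hand sides as sums of tree-terms, uses the homotopy equation $\id-ip=m_1h+hm_1$ on each internal edge, and checks that the resulting terms pair up via the $A_\infty$-relations on $M$ and $A$. Strict unitality of the transferred structure follows from the arrangement on $h,p,i$ made in the first step: any tree with an $\eta$-labelled input has either an internal edge $h$ meeting that slot (giving zero) or reduces, via $m_2^M(\id\ten\eta)=\id$, to a smaller tree; the surviving terms produce $m_2^{H^*(M)}(\id\ten\eta)=\id$ and $m_n^{H^*(M)}(\cdots\ten\eta\ten\cdots)=0$ for $n\neq 2$. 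Finally, $f_1=i$ is a quasi-isomorphism by the choice of deformation retract.

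The main obstacle is the tree-combinatorial bookkeeping in verifying \eqref{e:associativity-for-modules}: signs and the asymmetry of the module-vertex slots have to be handled carefully, especially to see that the terms arising from $-ip$ on an internal edge (which do \emph{not} come from an $A_\infty$-relation) cancel among themselves in pairs. This cancellation is the same mechanism as in Kadeishvili's theorem for $A_\infty$-algebras, and the module version is carried out in \cite[Proposition 3.3.1.7]{Lefevre03}, on which I would rely for the detailed sign computation.
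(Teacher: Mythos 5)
The paper does not give its own proof of this proposition; it simply cites \cite[Proposition 3.3.1.7]{Lefevre03}, and your sketch is precisely the homotopy transfer (Kadeishvili/sum-over-trees) argument that Lef\`evre-Hasegawa uses there, including the use of semisimplicity of $R$ to get an $R$-linear deformation retract onto $H^*(M)$ and the tree formulas for $m_n^{H^*(M)}$ and $f_n$. One small imprecision: $M$ is a graded right $R$-module (not a bimodule), so the retract should be chosen $R$-linear on the right; since $R$ is a finite product of copies of $k$ this is automatic, and the rest of your outline is consistent with the cited reference.
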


\subsection{Derived categories}
Let $A$ be a strictly unital $A_\infty$-algebra over $R$. Let $\Mod_{\infty}(A)$ be the category of strictly unital
$A_\infty$-modules over $A$ with strictly unital
$A_\infty$-morphisms as morphisms. The \emph{derived category}
$\cd(A)$ is the category obtained from $\Mod_{\infty}(A)$ by
formally inverting all $A_\infty$-quasi-isomorphisms. The category $\cd(A)$ is
a triangulated category whose suspension functor is the shift
functor $\Sigma$.  It has arbitrary (set-indexed) direct sums.

Let $A$ be a dg algebra over $R$. There are three classes of modules we can consider. First we can view $A$ as a strictly unital $A_\infty$-algebra over $R$ with vanishing $m_n$ for $n\geq 3$ and consider strictly unital $A_\infty$-modules over $A$. Secondly, we can consider unital dg $A$-modules (see \cite[Section 2.1.1]{Lefevre03}), which are exactly the strictly unital $A_\infty$-modules $M$ over $A$ with $m_n^M=0$ for $n\geq 3$. Thirdly,  we can consider dg modules over $A$ which is considered a dg $k$-algebra. It is easy to check that the second and the third classes coincide. By \cite[Lemme 4.1.3.8]{Lefevre03}, the derived category of dg $A$-modules is canonically equivalent to the derived category of strictly unital $A_\infty$-modules over $A$. We will identify these two derived categories.

\begin{theorem}\label{t:quasi-isomorphism-induce-derived-equivalence} \emph{({\cite[Th\'eor\`eme 4.1.2.4]{Lefevre03}})}
  Let $f:A\to B$ be a strictly unital $A_\infty$-quasi-isomorphism of strictly unital $A_\infty$-algebras over $R$. Then there is a triangle equivalence $f^*:\cd(B)\to\cd(A)$ which takes $B_B$ to an $A_\infty$-module isomorphic to $A_A$ in $\cd(A)$.
\end{theorem}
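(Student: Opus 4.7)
The plan is to construct $f^*$ explicitly as a restriction-of-scalars functor on $A_\infty$-modules, verify it descends to a triangulated functor on derived categories, and then show it is an equivalence via the minimal-model machinery.

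First I would define $f^*$ on strictly unital $A_\infty$-modules. For a strictly unital $A_\infty$-module $M$ over $B$, let $f^*M$ have the same underlying graded right $R$-module as $M$, with structure maps
\[
m_n^{f^*M}=\sum_{\substack{p\geq 0\\ i_1,\ldots, i_p\geq 1\\ i_1+\cdots+i_p=n-1}}(-1)^{\omega}\, m_{1+p}^M\circ(\id_M\ten f_{i_1}\ten\cdots\ten f_{i_p}),
\]
where $\omega$ is the Koszul sign appearing in~\eqref{e:associativity-of-morphisms} (the term $p=0$ contributes $m_1^M$, so $m_1^{f^*M}=m_1^M$). A parallel formula defines $f^*$ on strictly unital $A_\infty$-morphisms of $B$-modules. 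Verifying the $A_\infty$-module identities~\eqref{e:associativity-for-modules}, strict unitality, and functoriality is a direct combinatorial check, modeled on the proof that composition of $A_\infty$-morphisms is an $A_\infty$-morphism; it uses only~\eqref{e:associativity-of-morphisms} for $f$ and~\eqref{e:associativity-for-modules} for $M$.

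Next I would observe that $f^*$ leaves underlying complexes and first components of morphisms unchanged, so it sends $A_\infty$-quasi-isomorphisms to $A_\infty$-quasi-isomorphisms and descends to a functor $f^*:\cd(B)\to\cd(A)$. Since the underlying graded module of $\Sigma M$ and of the mapping cone of any strictly unital $A_\infty$-morphism are left untouched by $f^*$, the descended functor commutes with $\Sigma$ and preserves the standard distinguished triangles, and is therefore triangulated. The identification $f^*(B_B)\cong A_A$ in $\cd(A)$ is then immediate: the family $(f_n)_{n\geq 1}$ itself constitutes a strictly unital $A_\infty$-morphism $A_A\to f^*(B_B)$ of $A_\infty$-modules over $A$, whose first component $f_1$ is a quasi-isomorphism of complexes by hypothesis.

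The main obstacle is proving that $f^*$ is an equivalence. My plan is to reduce to strictly unital \emph{minimal} $A_\infty$-modules via Proposition~\ref{p:minimal-model}, for which $A_\infty$-quasi-isomorphisms become genuine isomorphisms in the homotopy category of $A_\infty$-modules. At this minimal level one constructs a quasi-inverse $g^*$ by induction: a homotopy inverse $g_1$ to $f_1$ exists because $H^*(f_1)$ is an isomorphism of graded algebras, and the higher components $g_n$ are built successively by solving a cocycle equation at each step, the relevant obstruction classes vanishing precisely because $f$ is an $A_\infty$-quasi-isomorphism. A cleaner but less hands-on alternative is to invoke Theorem~\ref{t:enveloping-algebra}: replacing $f:A\to B$ by a quasi-isomorphism $A'\to B'$ of dg models identifies the two $A_\infty$-derived categories with the ordinary dg derived categories $\cd(A')$ and $\cd(B')$, reducing the claim to the classical fact (\cite[Section~6]{Keller94}) that a quasi-isomorphism of dg algebras induces a triangle equivalence via restriction of scalars.
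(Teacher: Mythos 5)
The paper does not prove this result --- it is quoted directly from Lef\`evre-Hasegawa's thesis --- so I can only compare your proposal against that source. Your definition of the restriction functor $f^*$ is the standard one and is correct; the observations that $f^*$ fixes underlying complexes (hence preserves quasi-isomorphisms and descends to derived categories), is triangulated, and sends $B_B$ to an object isomorphic to $A_A$ via the strictly unital $A_\infty$-module morphism $(f_n)_{n\geq 1}:A_A\to f^*(B_B)$ whose first component is a quasi-isomorphism, are all sound.

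The gap is in proving that $f^*$ is an equivalence, and it affects both routes you sketch. In Approach~1, even granting a strictly unital $A_\infty$-quasi-inverse $g:B\to A$ with $g\circ f$ and $f\circ g$ homotopic to the respective identities, you must still justify the crucial additional lemma that homotopic strictly unital $A_\infty$-morphisms of $A_\infty$-algebras induce naturally isomorphic restriction functors on derived categories. Only with that lemma can you deduce $f^*g^*=(g\circ f)^*\cong(\id_A)^*=\id_{\cd(A)}$ and $g^*f^*=(f\circ g)^*\cong(\id_B)^*=\id_{\cd(B)}$. Your sketch never mentions this; the invocation of Proposition~\ref{p:minimal-model} concerns module-level quasi-isomorphisms and does not supply a \emph{natural} isomorphism of functors, which is what is needed. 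Approach~2 is circular as written: identifying $\cd(A)$ with $\cd(A')$ for a dg model $A'$, along the $A_\infty$-quasi-isomorphism $A\to A'$ from Theorem~\ref{t:enveloping-algebra}, is precisely (a special case of) the theorem being proved; moreover Theorem~\ref{t:enveloping-algebra} does not automatically give you a dg quasi-isomorphism $A'\to B'$ compatible with $f$. For the record, Lef\`evre's own proof proceeds by a third route, passing through the bar construction and the equivalence between $\cd(A)$ and a coderived-type category of comodules over the bar coalgebra of $A$, which reduces the invariance under $A_\infty$-quasi-isomorphism to a corresponding invariance for weak equivalences of dg cocategories. Your Approach~1 could be salvaged into an alternative, more elementary argument, but only once the homotopy-invariance lemma for $f^*$ is stated and proved.
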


\subsection{Morphisms from `projectives'}
Let $A$ be a strictly unital $A_\infty$-algebra over $R$. Let $e$ be a strict idempotent of $A$. For a strictly unital $A_\infty$-module $M$ over $A$ let $Me=\{me:=m_2^M(m\ten e)|m\in M\}$. Then applying the identity \eqref{e:associativity-for-modules} for $n=2$ to $m\ten e$, we get
\[
m_1(me)=(-1)^{|m|} mm_1(e)+m_1(m)e=m_1(m)e\in Me.
\]
So $Me$ is a subcomplex of $M$. We will need the following result.

\begin{lemma}\label{l:morphism-from-projective}
Let $e$ be a strict idempotent of $A$. For a strictly unital $A_\infty$-module $M$ over $A$ and an
integer $p$, 
there is an isomorphism
\begin{eqnarray}\label{eq:morphism-space-from-free-module}
\Hom_{\cd(A)}(eA,\Sigma^p M)\cong H^p(Me).
\end{eqnarray}
\end{lemma}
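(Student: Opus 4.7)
The plan is to reduce the statement to the familiar dg-algebra case. First, Theorem~\ref{t:enveloping-algebra} produces a strictly unital $A_\infty$-quasi-isomorphism $\phi : A \to A'$ with $A'$ a dg algebra over $R$, and Theorem~\ref{t:quasi-isomorphism-induce-derived-equivalence} yields a triangle equivalence $\phi^* : \cd(A') \to \cd(A)$. Since $\phi$ is strictly unital, $\phi_1 \circ \eta_A = \eta_{A'}$, so $e' := \phi_1(e)$ lies in $\im(\eta_{A'})$ and is therefore a closed idempotent in the dg algebra $A'$, i.e.\ $(e')^2 = e'$ and $d(e') = 0$.

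Next I would match the relevant data on the two sides. Given any strictly unital $A_\infty$-module $M$ over $A$, pick an $A'$-module $N$ with $\phi^*(N) \cong M$ in $\cd(A)$. A direct unravelling of the $A_\infty$-module structure on $\phi^*(N)$ shows that the subcomplex $(\phi^*N)e$, defined via the pulled-back action of $e$, coincides with $Ne'$, defined via the ordinary right action of $e'$: indeed $m_2^{\phi^*N}(n \otimes e) = m_2^N(n \otimes \phi_1(e)) = m_2^N(n \otimes e')$, and the differentials agree since $m_1$ is unchanged by pullback. Similarly, $\phi$ induces a strictly unital $A_\infty$-morphism $eA \to \phi^*(e'A')$ of $A_\infty$-modules over $A$ whose first component is $\phi_1|_{eA} : eA \to e'A'$, and this component is a quasi-isomorphism of complexes, so $\phi^*(e'A') \cong eA$ in $\cd(A)$.

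With these identifications in place, it suffices to prove the statement when $A$ is a dg algebra and $e$ is a closed idempotent in $A$. There $eA$ is a direct summand of the free dg module $A_A$ via $A_A = eA \oplus (1-e)A$, hence $K$-projective, so by the formula recalled in Section~\ref{ss:non-positive-dg-alg},
\[
\Hom_{\cd(A)}(eA, \Sigma^p M) \cong H^p \cHom_A(eA, M).
\]
The evaluation map $\cHom_A(eA, M) \to Me$ sending $f$ to $f(e)$ is an isomorphism of complexes, with inverse sending $m \in Me$ to the $A$-linear map $ea \mapsto m \cdot a$, yielding the desired $H^p(Me)$.

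The step I expect to require the most care is the $A_\infty$-level bookkeeping: verifying that the formulas produced by the components $\phi_n$ really do assemble into a strictly unital $A_\infty$-morphism $eA \to \phi^*(e'A')$ of $A_\infty$-modules over $A$, and analogously the subcomplex identification $(\phi^*N)e = Ne'$. Once this compatibility between the restriction functor and the subspaces cut out by strict idempotents is in hand, the remainder reduces to the standard computation over a dg algebra.
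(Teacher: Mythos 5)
Your reduction to the dg case is a genuinely different route from the paper's. The paper proves the lemma directly at the $A_\infty$ level: it invokes Lef\`evre-Hasegawa's description of $\Hom_{\cd(A)}(eA,M)$ as strictly unital $A_\infty$-morphisms $eA\to M$ modulo homotopy, then constructs an explicit bijection with $H^0(Me)$ in four computational steps (Step 1 sends $m\in Z^0(Me)$ to the $A_\infty$-morphism $f_n(a_1\otimes\cdots\otimes a_n)=(-1)^{n+1}m_{n+1}(m\otimes a_1\otimes\cdots\otimes a_n)$; Steps 2--4 show this is a well-defined bijection on homotopy/cohomology classes). Your proof instead transports the question along a strictly unital quasi-isomorphism $\phi:A\to A'$ to a dg algebra, where it reduces to the standard $\cHom_{A'}(e'A',N)\cong Ne'$. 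Both approaches are valid, and in both the real labor is $A_\infty$-level sign/identity bookkeeping; yours simply relocates it. The steps you flag as needing care do need it but do go through: $(\phi^*N)e=Ne'$ follows because $m_2^{\phi^*N}(n\otimes e)=m_2^N(n\otimes\phi_1(e))$ and $m_1^{\phi^*N}=m_1^N$; and $\phi_n(eA\otimes A^{\otimes(n-1)})\subseteq e'A'$ follows by applying the $A_\infty$-morphism identity to $e\otimes a_1\otimes\cdots\otimes a_n$ and using strict unitality of $\phi$ together with the strict idempotent property of $e$, which kills all terms except $\phi_n(ea_1\otimes\cdots\otimes a_n)=e'\phi_n(a_1\otimes\cdots\otimes a_n)$. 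One further check you should make explicit: since $M\cong\phi^*N$ in $\cd(A)$ via a strictly unital $A_\infty$-quasi-isomorphism $f$ and not an equality, you need $f_1|_{Me}:Me\to(\phi^*N)e$ to be a quasi-isomorphism; this holds because $f_1(me)=f_1(m)e$ (by the $n=2$ module-morphism identity, using strict unitality and $m_1(e)=0$), so $f_1$ respects the decompositions $M=Me\oplus M(1-e)$ and $\phi^*N=(\phi^*N)e\oplus(\phi^*N)(1-e)$, hence restricts to a quasi-isomorphism on each summand. With these verifications made explicit, your argument is complete and gives a more modular, conceptually transparent proof at the cost of relying on the restriction functor machinery; the paper's proof is more computational but self-contained and uses only the $A_\infty$-module axioms directly.
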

\begin{proof}
This can be obtained as a consequence of a suitable version of Yoneda's lemma. We do not find a reference in the literature, so we give a direct proof here. It is enough to prove for the case $p=0$.
By \cite[Th\'eor\`eme 4.1.3.1]{Lefevre03}, $\Hom_{\cd(A)}(eA, M)$ is the same as the space of strictly unital $A_\infty$-morphisms from $eA$ to $M$ modulo those homotopic to $0$. We show in four steps that this space is canonically isomorphic to $H^0(Me)$. Here two strictly unital $A_\infty$-morphisms $f,g: M\to M'$ of strictly unital $A_\infty$-modules are homotopic if there is a strictly unital homotopy $h$ between $f$ and $g$, that is, a family of homogeneous $K$-linear maps $h_n:M\ten_R A^{\ten_R(n-1)}\to M'$ ($n\geq 1$) of degree $-n$ such that $h_n(\id_M\ten \id\ten\cdots\ten \id\ten \eta\ten\id\ten\cdots\ten \id)=0$ for all $n\geq 2$ and that the following identity holds for all $n\geq 1$
\begin{eqnarray}\label{e:homotopy}
f_n-g_n=\sum_{s+t=n}(-1)^t m_{1+t}(h_s\ten \id^{\ten t})+\sum_{i+j+l=n}(-1)^{ij+l}h_{i+1+l}(\id^{\ten i}\ten m_j \ten \id^{\ten l}).
\end{eqnarray}

\smallskip
Step 1: Let $m\in Z^0(Me)$. For $n\geq 1$, define a homogeneous $R$-linear map of degree $1-n$
\[
\xymatrix@R=0.5pc{f_n: eA\ten_R A^{\ten_R (n-1)}\ar[r] & M\\
a_1\ten a_2\ten\cdots\ten a_n\ar@{|->}[r] & (-1)^{n+1}m_{n+1}(m\ten a_1\ten\cdots\ten a_n).
}
\]
Then $f=(f_n)_{n\geq 1}$ is a strictly unital $A_\infty$-morphism from $eA$ to $M$.

The identity $f_n(\id_{eA}\ten\id\ten\cdots\ten\id\ten\eta\ten\id\ten\cdots\ten\id)=0$ ($n\geq 2$) is clear since $A$ is strictly unital. We need to check the identity \eqref{e:associativity-for-morphisms-of-modules} for all $n\geq 1$ applied to $a_1\ten a_2\ten \cdots a_n$, where $a_1\in eA$ and $a_2,\ldots,a_n\in A$ are homogenous. We have (note that when flipping tensors we have the Koszul sign: $(\varphi\ten \psi)(u\ten v)=(-1)^{|\psi|\cdot|u|}(\varphi(u)\ten\psi(v))$)
\begin{align*}
\mathrm{LHS}&=\sum_{i+j+l=n}(-1)^{ij+l}f_{i+1+l}((-1)^{(|a_1|+\ldots+|a_i|)(2-j)}a_1\ten\cdots\ten a_i\\
&\hspace{50pt}\ten m_j(a_{i+1}\ten\cdots\ten a_{i+j})\ten a_{i+j+1}\ten\cdots\ten a_n)\\
&=\sum_{i+j+l=n}(-1)^{ij+l}(-1)^{i+2+l}m_{i+2+l}((-1)^{(|a_1|+\ldots+|a_i|)(2-j)}m\ten a_1\ten\cdots\ten a_i\\
&\hspace{50pt}\ten m_j(a_{i+1}\ten\cdots\ten a_{i+j})\ten a_{i+j+1}\ten\cdots\ten a_n)\\
&=\sum_{i+j+l=n+1, i\geq 1}(-1)^{ij+l+n}m_{i+1+l}(\id^{\ten i}\ten m_j\ten \id^{\ten l})(m\ten a_1\ten\cdots\ten a_n)\\[5pt]
\mathrm{RHS}&=\sum_{s+t=n}m_{1+t}(f_s(a_1\ten\cdots\ten a_s)\ten a_{s+1}\ten\cdots\ten a_n)\\
&=\sum_{s+t=n}(-1)^{s+1} m_{1+t}(m_{s+1}(m\ten a_1\ten\cdots\ten a_s)\ten a_{s+1}\ten\cdots\ten a_n)\\
&=\sum_{s+t=n+1,s\geq 2}(-1)^{n+1+t}m_{1+t}(m_s\ten \id^{\ten t})(m\ten a_1\ten\cdots\ten a_n).
\end{align*}
By \eqref{e:associativity-for-modules}, we have
\begin{align*}
\mathrm{LHS}-\mathrm{RHS}&=-m_{1+n}(m_1\ten \id^{\ten n})(m\ten a_1\ten\cdots\ten a_n)\\
&=-m_{1+n}(m_1(m)\ten a_1\ten\cdots\ten a_n)\\
&=0.
\end{align*}

\smallskip
Step 2: Let $m\in Z^0(Me)$. In Step 1, we associate to $m$ a strictly unital $A_\infty$-morphism $f$ from $eA$ to $M$. We claim that $f$ is homotopic to $0$ if and only if $m$ belongs to $B^0(Me)$.

We first show that for any $m\in Me$ we have $m=me$. Suppose $m=m'e$. Applying \eqref{e:associativity-for-modules} for $n=3$ to $m'\ten e\ten e$ we get
\[m=m'e=m'(ee)=(m'e)e=me.\]

Now we prove the `only if' part.  Assume that $f$ is homotopic to $0$. Then there exists a homogeneous $R$-linear map $h_1:eA\to M$ of degree $-1$ such that $f_1=m_1h_1+h_1m_1$. So
\[
m=me=f_1(e)=m_1h_1(e)+h_1m_1(e)=m_1h_1(e)\in B^0(Me),
\]
as $e$ is a strict idempotent.

Next we prove the `if' part. Assume that $m=m_1(m')$, where $m'\in M^{-1}$. For $n\geq 1$, define a homogeneous $R$-linear map of degree $-n$
\[
\xymatrix@R=0.5pc{ h_n: eA\ten_R A^{\ten_R(n-1)}\ar[r] & M\\
a_1\ten a_2\ten\cdots\ten a_n\ar@{|->}[r] & m_{n+1}(m'\ten a_1\ten\cdots\ten a_n).
}
\]
Then $h=(h_n)_{\geq 1}$ is a strictly unital homotopy between $f$ and $0$. The identity $h_n(\id_{eA}\ten \id\ten\cdots\ten \id\ten \eta\ten\id\ten\cdots\ten \id)=0$ ($n\geq 2$) is clear since $A$ is strictly unital. We need to check the identity \eqref{e:homotopy} for all $n\geq 1$ applied to $a_1\ten a_2\ten\cdots\ten a_n$, where $a_1\in eA$ and $a_2,\ldots,a_n\in A$ are homogeneous. We have
\begin{align*}
\mathrm{RHS}&=\sum_{s+t=n}(-1)^t m_{1+t}(h_s(a_1\ten\cdots \ten a_s)\ten a_{s+1}\ten\cdots\ten a_n)\\
&\hspace{10pt}+\sum_{i+j+l=n}(-1)^{ij+l} h_{i+1+l}((-1)^{(|a_1|+\ldots+|a_i|)(2-j)}a_1\ten\cdots\ten a_i\\
&\hspace{60pt}\ten m_j(a_{i+1}\ten\cdots\ten a_{i+j})\ten a_{i+j+1}\ten\cdots\ten a_n)\\
&=\sum_{s+t=n}(-1)^t m_{1+t}(m_{s+1}(m'\ten a_1\ten\cdots \ten a_s)\ten a_{s+1}\ten\cdots\ten a_n)\\
&\hspace{10pt}+\sum_{i+j+l=n}(-1)^{(i+1)j+l} m_{i+2+l}((-1)^{(|m'|+|a_1|+\ldots+|a_i|)(2-j)}m'\ten a_1\ten\cdots\ten a_i\\
&\hspace{60pt}\ten m_j(a_{i+1}\ten\cdots\ten a_{i+j})\ten a_{i+j+1}\ten\cdots\ten a_n)\\
&=\sum_{s+t=n+1,s\geq 2} (-1)^t m_{1+t}(m_s\ten \id^{\ten t}))(m'\ten a_1\ten\cdots\ten a_n)\\
&\hspace{10pt}+\sum_{i+j+l=n+1,i\geq 1}(-1)^{ij+l}m_{i+1+l}(\id^{\ten i}\ten m_j\ten \id^{\ten l})(m'\ten a_1\ten\cdots\ten a_n)\\
&\hspace{-4pt}\stackrel{\eqref{e:associativity-for-modules}}{=}-(-1)^n m_{1+n}(m_1\ten \id^{\ten n})(m'\ten a_1\ten \cdots\ten a_n)\\
&=(-1)^{n+1} m_{n+1}(m\ten a_1\ten\cdots\ten a_n)\\
&=f_n(a_1\ten\cdots\ten a_n) =\mathrm{LHS}.
\end{align*}

\smallskip
Step 3:  Let $f$ be a strictly unital $A_\infty$-morphism from $eA$ to $M$.
We first show that $f_1(e)\in Z^0(Me)$. The identity \eqref{e:associativity-for-morphisms-of-modules} for $n=1$ applied to $e$ yields $m_1(f_1(e))=f_1(m_1(e))=0$, so $f_1(e)\in Z^0(M)$. The same identity for $n=2$ applied to $e\ten e$ yields $f_1(e)=f_1(e)e+m_1f_2(e\ten e)=f_1(e)e\in Me$. The last equality holds because $f_2(\id\ten\eta)=0$ and $e\in\im(\eta)$.

We claim that $f$ is homotopic to the  strictly unital $A_\infty$-morphism from $eA$ to $M$ associated to $f_1(e)$ as in Step 1. It is enough to show that if $f_1(e)=0$, then $f$ is homotopic to $0$.

For $n\geq 1$, define a homogeneous $R$-linear map of degree $-n$
\[
\xymatrix@R=0.5pc{ h_n: eA\ten_R A^{\ten_R(n-1)}\ar[r] & M\\
a_1\ten a_2\ten\cdots\ten a_n\ar@{|->}[r] & (-1)^{n+1}f_{n+1}(e\ten a_1\ten\cdots\ten a_n).
}
\]
Then $h=(h_n)_{n\geq 1}$ is a strictly unital homotopy between $f$ and $0$. The identity $h_n(\id_{eA}\ten \id\ten\cdots\ten \id\ten \eta\ten\id\ten\cdots\ten \id)=0$ ($n\geq 2$) is clear since $f$ is strictly unital. We need to check the identity \eqref{e:homotopy} for all $n\geq 1$ applied to $a_1\ten a_2\ten\cdots\ten a_n$, where $a_1\in eA$ and $a_2,\ldots,a_n\in A$ are homogeneous. We have
\begin{align*}
\mathrm{RHS}&=\sum_{s+t=n}(-1)^t m_{1+t}(h_s(a_1\ten\cdots \ten a_s)\ten a_{s+1}\ten\cdots\ten a_n)\\
&\hspace{10pt}+\sum_{i+j+l=n}(-1)^{ij+l} h_{i+1+l}((-1)^{(|a_1|+\ldots+|a_i|)(2-j)}a_1\ten\cdots\ten a_i\\
&\hspace{60pt}\ten m_j(a_{i+1}\ten\cdots\ten a_{i+j})\ten a_{i+j+1}\ten\cdots\ten a_n)\\
&=\sum_{s+t=n}(-1)^t m_{1+t}((-1)^{s+1}f_{s+1}(e\ten a_1\ten\cdots \ten a_s)\ten a_{s+1}\ten\cdots\ten a_n)\\
&\hspace{10pt}+\sum_{i+j+l=n}(-1)^{ij+l}(-1)^{i+2+l} f_{i+2+l}((-1)^{(|a_1|+\ldots+|a_i|)(2-j)}e\ten a_1\ten\cdots\ten a_i\\
&\hspace{60pt}\ten m_j(a_{i+1}\ten\cdots\ten a_{i+j})\ten a_{i+j+1}\ten\cdots\ten a_n)\\
&=\sum_{s+t=n+1,s\geq 2} (-1)^{n+1}m_{1+t}(f_s\ten \id^{\ten t}))(e\ten a_1\ten\cdots\ten a_n)\\
&\hspace{10pt}+\sum_{i+j+l=n}(-1)^{(i+1)j+l+n} f_{i+2+l}(\id^{\ten (i+1)}\ten m_j\ten \id^{\ten l})(e\ten a_1\ten\cdots\ten a_n)\\
&\hspace{-11pt}\stackrel{f_1(e)=0}{=}\sum_{s+t=n+1} (-1)^{n+1}m_{1+t}(f_s\ten \id^{\ten t}))(e\ten a_1\ten\cdots\ten a_n)\\
&\hspace{10pt}+\sum_{i+j+l=n+1,i\geq 1}(-1)^{ij+l+n} f_{i+1+l}(\id^{\ten i}\ten m_j\ten \id^{\ten l})(e\ten a_1\ten\cdots\ten a_n)\\
&\hspace{-4pt}\stackrel{\eqref{e:associativity-for-morphisms-of-modules}}{=}-(-1)^n \sum_{j+l=n+1}(-1)^l f_{1+l}(m_j\ten \id^{\ten l})(e\ten a_1\ten \cdots\ten a_n)\\
&=f_n(m_2\ten \id^{\ten n})(e\ten a_1\ten\cdots\ten a_n)\\
&=f_n(a_1\ten\cdots\ten a_n) =\mathrm{LHS}.
\end{align*}

\smallskip
Step 4: By Step 3, $\Hom_{\cd(A)}(eA, M)$ is canonically isomorphic to the space of strictly unital $A_\infty$-morphisms from $eA$ to $M$ associated to $m\in Z^0(Me)$ modulo those homotopic to $0$, and hence to $H^0(Me)=Z^0(Me)/B^0(Me)$ by Step 2.
\end{proof}

\subsection{Perfect derived categories and finite-dimensional derived categories}
Denote by $\per(A)$ the thick subcategory of $\cd(A)$
generated by $A_A$, and denote by $\cd_{fd}(A)$
the full subcategory of $\cd(A)$ consisting of those
$A_\infty$-modules whose total cohomology is finite-dimensional.

  \begin{lemma}\label{l:der-equiv-restricts-to-per-and-fd}
\begin{itemize}
\item[(a)] Let $M$ be an object of $\cd(A)$. Then $M$ is compact if and only if it belongs to $\per(A)$, and $M$ belongs to $\cd_{fd}(A)$ if and only if $\bigoplus_{i\in\mathbb{Z}} \Hom_{\cd(A)}(N,\Sigma^{i}M)$ is finite-dimensional for any $N\in\per(A)$.
\item[(b)]
    Let $A$ and $B$ be two strictly unital $A_\infty$-algebras over $R$. A triangle equivalence $\cd(A)\rightarrow \cd(B)$ restricts to triangle equivalences $\per(A)\rightarrow\per(B)$ and $\cd_{fd}(A)\rightarrow \cd_{fd}(B)$.
  \end{itemize}
  \end{lemma}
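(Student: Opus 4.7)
My plan is to reduce everything to the intrinsic characterization of $\per(A)$ as the subcategory of compact objects, from which (b) follows because both conditions in (a) are purely triangulated-categorical.

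For part (a), first I would show that $A_A$ is compact. Since $A$ is strictly unital, the unit $1\in\mathrm{im}(\eta)$ is a strict idempotent, and Lemma~\ref{l:morphism-from-projective} gives a natural isomorphism $\Hom_{\cd(A)}(A,\Sigma^{p}M)\cong H^{p}(M)$ for every strictly unital $A_\infty$-module $M$ and every $p\in\Z$. Since cohomology commutes with arbitrary direct sums in $\cd(A)$, this shows that $A_A$ is compact. Because the compact objects form a thick subcategory, $\per(A)=\thick(A)$ is contained in $\cd(A)^{c}$. For the converse inclusion, I would invoke Neeman's theorem on compactly generated triangulated categories: the isomorphism $\Hom_{\cd(A)}(A,\Sigma^{p}M)\cong H^{p}(M)$ shows that $A$ is a compact generator, so $\cd(A)^{c}=\thick(A)=\per(A)$. (Alternatively one can reduce to the dg case by using the enveloping dg algebra of Theorem~\ref{t:enveloping-algebra} together with the equivalence of derived categories given by Theorem~\ref{t:quasi-isomorphism-induce-derived-equivalence}, where the statement is classical.)

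For the finite-dimensionality characterization, one direction is immediate: if the condition on $\bigoplus_{i}\Hom_{\cd(A)}(N,\Sigma^{i}M)$ holds for all $N\in\per(A)$, I apply it to $N=A$ and use $\Hom_{\cd(A)}(A,\Sigma^{i}M)\cong H^{i}(M)$ to conclude that the total cohomology of $M$ is finite-dimensional, i.e.\ $M\in\cd_{fd}(A)$. For the converse, I would fix $M\in\cd_{fd}(A)$ and consider the full subcategory
\[
\mathcal{N}_{M}=\{\,N\in\cd(A)\mid \bigoplus_{i\in\Z}\Hom_{\cd(A)}(N,\Sigma^{i}M)\text{ is finite-dimensional}\,\}.
\]
A standard dévissage shows that $\mathcal{N}_{M}$ is closed under shifts, extensions and direct summands, hence is a thick subcategory of $\cd(A)$. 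By the cohomology formula above, $A\in\mathcal{N}_{M}$, so $\per(A)=\thick(A)\subseteq \mathcal{N}_{M}$, which is exactly what is required.

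For part (b), let $F:\cd(A)\to\cd(B)$ be a triangle equivalence. By part (a), $\per(A)$ is characterized inside $\cd(A)$ as the subcategory of compact objects, which is a property preserved by any triangle equivalence; hence $F$ restricts to a triangle equivalence $\per(A)\to\per(B)$. Then $\cd_{fd}(A)$ is characterized inside $\cd(A)$ by the Hom-condition in (a), which involves only $\per(A)$ and the triangulated structure; since both are preserved by $F$, the equivalence restricts further to $\cd_{fd}(A)\to\cd_{fd}(B)$. The main point to be careful about is the compact-generator argument for $A_\infty$-algebras in part (a); I expect the cleanest route is precisely the reduction to the dg case via Theorems~\ref{t:enveloping-algebra} and~\ref{t:quasi-isomorphism-induce-derived-equivalence}, which avoids re-proving Neeman's theorem in the $A_\infty$ setting.
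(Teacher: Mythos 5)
Your proposal is correct and, via the route you yourself flag as cleanest (reduction to the enveloping dg algebra using Theorems~\ref{t:enveloping-algebra} and~\ref{t:quasi-isomorphism-induce-derived-equivalence} and then citing the classical dg-algebra result), matches the paper's own argument; the d\'evissage you spell out via the subcategory $\mathcal{N}_M$ is exactly what the paper leaves implicit when it says ``follows from Lemma~\ref{l:morphism-from-projective} by d\'evissage.'' The alternative direct argument you sketch---using $\Hom_{\cd(A)}(A,\Sigma^{p}M)\cong H^{p}(M)$ from Lemma~\ref{l:morphism-from-projective} together with Neeman's theorem---is also valid, but it requires implicitly re-establishing Brown representability / compact generation in the $A_\infty$ setting, which is precisely what the reduction to the dg model avoids.
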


\begin{proof}
Let $A'$ be a dg model of $A$ as in Theorem~\ref{t:enveloping-algebra}. Then by Theorem~\ref{t:quasi-isomorphism-induce-derived-equivalence} there is a triangle equivalence $\cd(A')\to\cd(A)$ which takes $A'_{A'}$ to $A_A$.  By \cite[Corollary 3.7]{Keller06d} (also \cite[Remark 5.3 (a)]{Keller94}), an object of $\cd(A')$ is compact if and only if it belongs to $\per(A')$. The first assertion of (a) follows immediately. The second assertion of (a) follows from Lemma~\ref{l:morphism-from-projective} by d\'evissage. (b) is a direct consequence of (a) since $\per(A)$ and $\cd_{fd}(A)$ admit intrinsic descriptions inside $\cd(A)$.
  \end{proof}

\subsection{Morita theorems}
We have the following Morita theorems for derived categories and for algebraic triangulated categories. They can be considered as special cases of \cite[Th\'eor\`eme 7.6.0.4]{Lefevre03} and \cite[Th\'eor\`eme 7.6.0.6]{Lefevre03}, respectively.

  \begin{theorem}\label{t:morita-for-der-cat}
  Let $A$ be a strictly unital $A_\infty$-algebra over $R$. Let $\{X_1,\ldots,X_r\}$ be a set of compact generators of $\cd(A)$, \ie $\per(A)=\thick(X_1,\ldots,X_r)$. Let $K$ be the direct product of $r$ copies of $k$. Then there is a strictly unital minimal $A_\infty$-algebra $B$ over $K$ such that as a graded algebra
  \[B= \bigoplus_{p\in\mathbb{Z}}\Hom_{\cd(A)}(\bigoplus_{i=1}^r X_i,\Sigma^p \bigoplus_{i=1}^r X_i),
  \]
  and there is a triangle equivalence
    \[\cd(B)\longrightarrow \cd(A)\]
    taking  $e_iB$ ($1\leq i\leq r$) to $X_{i}$.
  \end{theorem}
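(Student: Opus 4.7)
The plan is to build $B$ by taking the minimal $A_\infty$-model of a dg endomorphism algebra, after first reducing to the dg case, and then transporting the derived equivalence produced by Keller's classical Morita theorem for dg algebras.

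First I would invoke Theorem~\ref{t:enveloping-algebra} to replace $A$ by a strictly unital $A_\infty$-quasi-isomorphic dg model $A'$ over $R$. By Theorem~\ref{t:quasi-isomorphism-induce-derived-equivalence} and Lemma~\ref{l:der-equiv-restricts-to-per-and-fd}(b), the equivalence $\cd(A')\to\cd(A)$ restricts to $\per(A')\to\per(A)$, so I may pull back the generators $X_i$ to compact generators $X_i'$ of $\cd(A')$. Thus I may assume that $A$ is a dg algebra over $R$ from the start.

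Next, for each $i$ choose a $K$-projective resolution $\bp X_i\to X_i$, set $X=\bigoplus_{i=1}^r \bp X_i$, and form the dg endomorphism algebra $\cE=\cEnd_A(X)$. The canonical inclusions and projections relative to the $r$ summands supply strict idempotents $e_i\in \cE^0$ which lift to a $k$-algebra homomorphism $K\to\cE$, so $\cE$ becomes a dg algebra over $K$. By Keller's dg Morita theorem (\cite[Lemma~6.1]{Keller94}) there is a triangle equivalence $\cd(\cE)\to\cd(A)$ sending $\cE$ to $X$, and by construction it sends $e_i\cE$ to $\bp X_i\simeq X_i$. Moreover,
\[
H^p(\cE)=\bigoplus_{i,j}H^p\cHom_A(\bp X_j,\bp X_i)\cong \Hom_{\cd(A)}\!\Bigl(\bigoplus_{j}X_j,\Sigma^{p}\bigoplus_{i}X_i\Bigr),
\]
so as a graded algebra $H^*(\cE)$ is exactly the graded algebra described in the statement.

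Then I would take a strictly unital minimal $A_\infty$-model of $\cE$ over $K$: by a Kadeishvili/homological perturbation construction (the module statement is Proposition~\ref{p:minimal-model}; the algebra version is in \cite[Th\'eor\`eme~1.4.1.1]{Lefevre03}), there exists a strictly unital minimal $A_\infty$-algebra $B$ over $K$ whose underlying graded algebra is $H^*(\cE)$, together with a strictly unital $A_\infty$-quasi-isomorphism $B\to\cE$. Applying Theorem~\ref{t:quasi-isomorphism-induce-derived-equivalence} gives a triangle equivalence $\cd(\cE)\to\cd(B)$ sending $\cE$ to an object isomorphic to $B_B$. Composing with the equivalence from Keller's theorem produces the desired triangle equivalence $\cd(B)\to\cd(A)$ which sends $B_B$ to $X$, and which therefore sends the direct summand $e_iB$ to the corresponding summand $X_i$ (this last identification uses that the equivalence is $K$-linear up to the identification of idempotents $e_i\in B$ with $e_i\in\cE$, which holds because the quasi-isomorphism $B\to\cE$ is strictly unital and thus preserves the chosen idempotents).

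The main technical point I expect to have to be careful about is the existence of the strictly unital minimal model over the non-commutative base $K$ and the tracking of the idempotents through the three equivalences; everything else is a direct application of the machinery recalled in Sections~\ref{s:silting-and-non-positive-dg-alg} and~\ref{s:A-infinite-alg}. Once $B\to\cE$ is known to be strictly unital with $e_i\mapsto e_i$, the identification $e_iB\mapsto X_i$ follows automatically.
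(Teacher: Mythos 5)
Your proof is correct, but it takes a more hands-on route than the paper's. The paper proves this theorem by directly invoking Lef\`evre-Hasegawa's $A_\infty$-categorical Morita theorem \cite[Th\'eor\`eme 7.6.0.4]{Lefevre03}, which in one step produces a strictly unital minimal $A_\infty$-category $\cb$ with objects $X_1,\ldots,X_r$ and an equivalence $\cd(\cb)\to\cd(A)$; the only remaining work in the paper is the routine translation from the $A_\infty$-category $\cb$ to the $A_\infty$-algebra $B=\bigoplus_{i,j}\Hom_\cb(X_i,X_j)$ over $K$ and the corresponding isomorphism $\cd(\cb)\to\cd(B)$. You instead unfold what that cited theorem encapsulates: reduce to a dg model $A'$ of $A$ via Theorems~\ref{t:enveloping-algebra} and~\ref{t:quasi-isomorphism-induce-derived-equivalence} together with Lemma~\ref{l:der-equiv-restricts-to-per-and-fd}(b), form the dg endomorphism algebra of a $K$-projective replacement of $\bigoplus_i X_i'$ (with its $K$-structure coming from the idempotents of the decomposition), apply Keller's dg Morita theorem, and then pass to a strictly unital minimal $A_\infty$-model over $K$. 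Both approaches are sound; yours replaces the single powerful citation by a chain of more elementary, more widely familiar ingredients (dg Morita plus Kadeishvili), at the cost of one genuine technical debt that you rightly flag: the existence of a strictly unital minimal model \emph{over the non-field base $K$} with a strictly unital quasi-isomorphism to $\cE$, which is needed to propagate the idempotents. Since $K$ is separable semisimple, this is available (Lef\`evre's Th\'eor\`eme~1.4.1.1 in the version over a monoidal base, or the $R$-relative homotopy transfer), so the argument closes. Your idempotent-tracking argument through the three equivalences is the only delicate point and is handled correctly: strict unitality of $B\to\cE$ ensures $e_i^B\mapsto e_i^\cE$, and the rest follows from the fact that each equivalence in the chain is additive.
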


\begin{proof}
By \cite[Th\'eor\`eme 7.6.0.4]{Lefevre03}, there is a strictly unital minimal $A_\infty$-category $\cb$ whose objects are $X_1,\ldots,X_r$, and whose morphism spaces are $\Hom_\cb(X_i,X_j)= \bigoplus_{p\in\mathbb{Z}}\Hom_{\cd(\ca)}(X_i,\Sigma^p X_j)$ for $1\leq i,j\leq r$, and a triangle equivalence
    \[\cd(\mathcal{B})\longrightarrow \cd(A),\]
    which takes the free $A_\infty$-module $X_i^\wedge$ associated to the object $X_i\in\mathcal{B}$  to $X_i\in\cd(A)$ for $1\leq i\leq r$.
We identify $K$ with $k\{\id_{X_1}\}\times\cdots\times k\{\id_{X_r}\}$. Let $B$ be the graded $K$-bimodule $\bigoplus_{i,j=1}^r \bigoplus_{p\in\mathbb{Z}}\Hom_{\cd(\ca)}(X_i,\Sigma^p X_j)$.  Then the multiplications on $\cb$ induce multiplications on $B$ such that $B$ becomes a minimal strictly unital $A_\infty$-algebra over $K$. The assignment $M\mapsto \bigoplus_{i=1}^r M(X_i)$ extends to an isomorphism $\Mod_\infty(\cb)\to\Mod_\infty(B)$, which induces a triangle isomorphism $\cd(\cb)\to \cd(B)$. So we have the desired triangle equivalence.
\end{proof}

\begin{theorem}\label{t:morita}
Let $\cc$ be an idempotent complete algebraic triangulated category. Assume that $\cc$ is generated by a set of objects $\{X_1,\ldots,X_r\}$, \ie $\cc=\thick(X_1,\ldots,X_r)$. Let $K$ be the direct product of $r$ copies of $k$. Then there is a strictly unital minimal $A_\infty$-algebra $B$ over $K$ such that as a graded algebra
\[
B= \bigoplus_{p\in\mathbb{Z}}\Hom_{\cc}(\bigoplus_{i=1}^r X_i,\Sigma^p \bigoplus_{i=1}^r X_i),
\]
and there is a triangle equivalence
\[
\per(B)\longrightarrow \cc
\]
taking  $e_iB$ ($1\leq i\leq r$) to $X_{i}$.
\end{theorem}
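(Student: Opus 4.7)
The plan is to reduce to Theorem~\ref{t:morita-for-der-cat} by first realising $\cc$ as the perfect derived category of a dg algebra. Since $\cc$ is algebraic and idempotent complete, there is, by definition, a Frobenius $k$-category $\ce$ together with a triangle equivalence $\underline{\ce}\iso\cc$. I would lift $M:=\bigoplus_{i=1}^r X_i$, together with its direct sum decomposition, to an object $\tilde M=\bigoplus_{i=1}^r \tilde X_i$ of $\ce$, and then work inside a standard dg enhancement $\ca$ of $\underline{\ce}$ (for instance, Keller's dg category of acyclic complexes of projective--injectives of $\ce$). Setting $C:=\cEnd_\ca(\tilde M)$ and using the orthogonal idempotents $e_i$ cutting out the summands $\tilde X_i$, $C$ becomes a dg algebra over $K=k^r$; the classical dg Morita theorem (\eg the argument of \cite[Lemma 6.1]{Keller94}, applied to $\tilde M$ inside $H^0(\ca)$) then provides a triangle equivalence $F\colon\per(C)\iso\thick_\cc(X_1,\ldots,X_r)=\cc$ sending $e_iC$ to $X_i$.

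Second, I would regard the dg algebra $C$ as a strictly unital $A_\infty$-algebra over $K$ (with $m_n=0$ for $n\geq 3$) and apply Theorem~\ref{t:morita-for-der-cat} to it, taking the compact generators to be $e_1C,\ldots,e_rC$ (which do generate $\per(C)$, since $C=\bigoplus_i e_iC$). This produces a strictly unital minimal $A_\infty$-algebra $B$ over $K$ whose underlying graded algebra is
\[
B=\bigoplus_{p\in\Z}\Hom_{\cd(C)}\Bigl(\bigoplus_{i=1}^r e_iC,\Sigma^p\bigoplus_{i=1}^r e_iC\Bigr),
\]
together with a triangle equivalence $G\colon\cd(B)\iso\cd(C)$ sending $e_iB$ to $e_iC$. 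By Lemma~\ref{l:der-equiv-restricts-to-per-and-fd}(b), $G$ restricts to $\per(B)\iso\per(C)$. The composition $F\circ G\colon\per(B)\to\cc$ is then a triangle equivalence taking $e_iB$ to $X_i$, and transporting the $\Hom$-spaces along $F\circ G$ identifies the graded algebra underlying $B$ with $\bigoplus_{p\in\Z}\Hom_\cc(\bigoplus_i X_i,\Sigma^p\bigoplus_i X_i)$, as required.

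The main obstacle is the opening step: producing a dg enhancement of $\cc$ in which the chosen generator is realised as a direct summand of a free dg module. This is precisely where the algebraic hypothesis is used in an essential way; every later step is a formal assembly from Theorem~\ref{t:morita-for-der-cat} and Lemma~\ref{l:der-equiv-restricts-to-per-and-fd}. An alternative route, taken by Lefèvre-Hasegawa in \cite[Th\'eor\`eme 7.6.0.6]{Lefevre03}, avoids this detour through dg algebras altogether, constructing $B$ directly as the Yoneda $A_\infty$-algebra of the generator inside a pretriangulated $A_\infty$-hull of $\cc$.
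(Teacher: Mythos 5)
Your proposal is correct, and it takes a genuinely different route from the paper's proof. The paper disposes of Theorem~\ref{t:morita} in one line: it invokes Lef\`evre-Hasegawa's Th\'eor\`eme~7.6.0.6 (the Morita theorem for algebraic triangulated categories, the ``triangulated'' analogue of Th\'eor\`eme~7.6.0.4) directly, obtaining a minimal $A_\infty$-category with objects $X_1,\ldots,X_r$ and a triangle equivalence $\per(\cb)\to\cc$, and then repackages the $A_\infty$-category as an $A_\infty$-algebra over $K$ exactly as in the proof of Theorem~\ref{t:morita-for-der-cat}. You instead factor the argument into two steps: (i) use the classical dg Morita theorem for algebraic triangulated categories (Keller's recognition theorem, where idempotent completeness of $\cc$ is needed to ensure $\per(C)\iso\thick_\cc(M)=\cc$ rather than merely a fully faithful embedding) to present $\cc$ as $\per(C)$ for a dg algebra $C$ over $K$ with $e_iC\mapsto X_i$; and (ii) apply the already-proved Theorem~\ref{t:morita-for-der-cat} to $C$ with compact generators $\{e_1C,\ldots,e_rC\}$, restricting to perfect subcategories via Lemma~\ref{l:der-equiv-restricts-to-per-and-fd}(b). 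What your route buys is self-containment relative to what the paper has already established: it reduces Theorem~\ref{t:morita} to Theorem~\ref{t:morita-for-der-cat} plus a well-known dg fact, rather than importing a second black-box theorem from Lef\`evre-Hasegawa's thesis. What it costs is the extra step of lifting $M=\bigoplus_iX_i$, with its decomposition, into a dg enhancement, which you correctly flag as the one place the algebraic hypothesis is used in an essential way. Both approaches ultimately rest on the same Koszul-duality machinery of Lef\`evre-Hasegawa (Th\'eor\`eme~7.6.0.6 is itself proved along lines close to your two-step reduction), so the difference is presentational rather than mathematical; still, your version is more explicit and is a perfectly good alternative proof.
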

\begin{proof} This is a consequence of  \cite[Th\'eor\`eme 7.6.0.6]{Lefevre03}. The proof is similar to that of the preceding result.
\end{proof}

\section{Simple-minded collections and minimal positive $A_\infty$-algebras}
\label{s:smc-and-positive-A-infinite-alg}

In this section we recall the definition of simple-minded collections and study strictly unital minimal positive
$A_\infty$-algebras, which are closely related to simple-minded collections.

\subsection{Simple-minded collections}\label{ss:smc}
Let $\cc$ be a triangulated category with suspension functor $\Sigma$. A collection $\{X_1,\ldots,X_r\}$ of objects of $\cc$ is \emph{simple-minded} if
\begin{itemize}
\item[-] $\Hom_{\cc}(X_i,\Sigma^p X_j)=0,~~\forall~p<0 \text{ and } 1\leq i,j\leq r$,
\item[-] $\Hom_{\cc}(X_i,X_j)=0$ if $1\leq i\neq j\leq r$ and $\End_\cc(X_i)$ is a division $k$-algebra for all $1\leq i\leq r$,
\item[-] $\cc=\thick(X_1,\ldots,X_r)$.
\end{itemize}
Two simple-minded collections $\{X_1,\ldots,X_r\}$ and $\{X'_1,\ldots,X'_r\}$ are said to be \emph{isomorphic} if up to reordering we have $X_i\cong X'_i$ for all $1\leq i\leq r$.  A simple-minded collection $\{X_1,\ldots,X_r\}$ is said to be \emph{elementary} if $\End_\cc(X_i)\cong k$ for all $1\leq i\leq r$. If the field $k$ is algebraically closed, then every simple-minded collection in $\cc$ is elementary. In general $\cc$ may contain non-elementary simple-minded collections and it is not known whether any two simple-minded collections in $\cc$ have the same set of endomorphism algebras. It is the case when the two simple-minded collections are related by a mutation (\cite[Remark 7.7]{KoenigYang14}).

Let $A$ be a non-positive dg $k$-algebra with $H^0(A)$ being finite-dimensional over $k$. Then a complete set of pairwise non-isomorphic simple $H^0(A)$-modules, viewed as dg $A$-modules via the homomorphism $A\to H^0(A)$, form a simple-minded collection in $\cd_{fd}(A)$, see for example \cite[Theorem A.1 (c)]{BruestleYang13}.

\subsection{Strictly unital minimal positive $A_\infty$-algebras}

Fix $r\in\mathbb{N}$. Let $K$ be the direct product of $r$ copies of $k$. Let $e_1,\ldots,e_r$ be the standard basis of $K$ over $k$.

Let $A$ be a strictly unital minimal
$A_\infty$-algebra over $K$. We say that $A$ is \emph{positive} if
\begin{itemize}
\item[--] $A^p=0$ for all $p<0$,
\item[--] $A^0=K$ and the unit is the embedding $K=A^0\hookrightarrow A$.
\end{itemize}

It is clear that $e_1,\ldots,e_r$ are strict idempotents of $A$. So $e_1A,\ldots,e_rA$ are submodules of $A$.
Moreover, as an $A_\infty$-module $A=\bigoplus_{i=1}^r e_iA$.

\begin{lemma}\label{l:a-infinite-alg-and-simple-minded}
 \begin{itemize}
  \item[(a)] Let $A$ be a strictly unital minimal positive $A_\infty$-algebra over $K$.
  Then $\{e_1A,\ldots,e_rA\}$
is an elementary simple-minded collection in $\per(A)$.
  \item[(b)] Let $\cc$ be an idempotent complete algebraic triangulated category and let
  $\{X_1,\ldots,X_r\}$ be an elementary simple-minded collection in $\cc$. Then
there is a strictly unital minimal positive $A_\infty$-algebra $A$ over $K$
together with a triangle equivalence $\cc\rightarrow\per(A)$ which
takes $X_i$ ($1\leq i\leq r$) to $e_iA$.
 \end{itemize}
\end{lemma}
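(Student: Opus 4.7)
For part (a), my plan is to verify the three simple-minded collection axioms by direct computation. I would start by observing that $A = \bigoplus_{i=1}^r e_iA$ as strictly unital $A_\infty$-modules (since $1_A = \sum e_i$ in $A^0 = K$ and each $e_i$ is a strict idempotent), which immediately gives $\per(A) = \thick(A) = \thick(e_1A,\ldots,e_rA)$, settling the third axiom. For the other two, I would apply Lemma~\ref{l:morphism-from-projective} to get $\Hom_{\cd(A)}(e_iA,\Sigma^p e_jA) \cong H^p((e_jA)e_i)$. Because $A$ is minimal, $m_1 = 0$, so the cohomology is just the degree-$p$ component of $(e_jA)e_i$; using the $n=3$ Stasheff identity (which in the minimal case reduces to associativity of $m_2$), one identifies $(e_jA)e_i$ with $e_j A e_i$ as a subcomplex. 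Positivity then gives $e_j A^p e_i = 0$ for $p<0$, while $A^0 = K = \prod_\ell k e_\ell$ yields $e_j A^0 e_i = \delta_{ij}\, k e_i$, which verifies the remaining axioms and simultaneously shows the collection is elementary.

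For part (b), my plan is to invoke Theorem~\ref{t:morita} on the generating set $\{X_1,\ldots,X_r\}$. This produces a strictly unital minimal $A_\infty$-algebra $B$ over $K$ with underlying graded algebra $\bigoplus_{p\in\Z} \Hom_\cc(\bigoplus_i X_i,\Sigma^p \bigoplus_j X_j)$, together with a triangle equivalence $\per(B) \to \cc$ sending $e_iB$ to $X_i$. It then suffices to verify that $B$ is positive: the vanishing $B^p = 0$ for $p<0$ is exactly the first simple-minded collection axiom, and $B^0 = \bigoplus_{i,j}\Hom_\cc(X_i, X_j) = \bigoplus_i \End_\cc(X_i) = K$ follows from the orthogonality of the $X_i$ together with the elementary hypothesis $\End_\cc(X_i) \cong k$. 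Setting $A := B$ then gives the desired conclusion.

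The main subtlety I anticipate is the identification of the unit: under the isomorphism $B^0 \cong K$ above, the strict unit $\eta: K \to B$ of Theorem~\ref{t:morita} must coincide with the inclusion $K = B^0 \hookrightarrow B$. This ought to be immediate from the construction, since the strict units on each $\End_\cc(X_i)$-component are precisely the identity morphisms $\id_{X_i}$, and these form the $e_i$-basis of $K$; so the verification is essentially bookkeeping. A secondary technical point in part~(a) is ensuring $(e_jA)e_i$ coincides with $e_j A e_i$, which genuinely uses minimality to kill the higher Stasheff terms and recover strict associativity of $m_2$. Neither step looks like a serious obstacle, so the result reduces largely to assembling the previously established Morita theorem and Yoneda-type computation (Lemma~\ref{l:morphism-from-projective}) in the correct order.
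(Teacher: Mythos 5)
Your proposal follows essentially the same route as the paper: for (a) decompose $A=\bigoplus_i e_iA$, compute $\Hom(e_iA,\Sigma^p e_jA)\cong H^p(e_jAe_i)$ via Lemma~\ref{l:morphism-from-projective}, and read off the three axioms from positivity and $A^0=K$; for (b) invoke Theorem~\ref{t:morita} and observe that positivity of the resulting $A_\infty$-algebra follows from the simple-minded axioms together with the elementary hypothesis. The two points you flag as potential subtleties --- reconciling $(e_jA)e_i$ with $e_jAe_i$ via the $n=3$ Stasheff identity in the minimal case, and identifying the strict unit of Theorem~\ref{t:morita} with the inclusion of identity morphisms --- are details the paper leaves implicit, and your bookkeeping of them is correct.
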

\begin{proof} (a) Put $P_i=e_iA$ ($1\leq i\leq r$). Then by Lemma~\ref{l:morphism-from-projective} we have
\begin{itemize}
\item[$\cdot$]
$\Hom(P_i,\Sigma^p P_j)=H^p(e_jAe_i)=0$ for $p<0$,
\item[$\cdot$]
$\Hom(P_i,P_j)=H^0(e_jAe_i)=\begin{cases} k & \text{ if } i=j, \\ 0
& \text{ otherwise.}\end{cases}$
\end{itemize}
Moreover, $P_1,\ldots,P_r$ generates $\per(A)$ since $A=P_1\oplus\ldots\oplus P_r$. Therefore $P_1,\ldots,P_r$ is an elementary simple-minded collection in
$\per(A)$.

(b) By Theorem~\ref{t:morita}, there is a strictly unital minimal $A_\infty$-algebra $A$ over $K$ such that as a graded algebra
\[
A=\bigoplus_{p\in\mathbb{Z}}\Hom_{\cc}(\bigoplus_{i=1}^r X_i,\Sigma^p \bigoplus_{i=1}^r X_i),
\]
and there is a triangle equivalence
\[
\cc \longrightarrow \per(A)
\]
taking  $X_{i}$ to $e_iA$, $1\leq i\leq r$. That $A$ is positive follows from the assumption that $\{X_1,\ldots,X_r\}$ is an elementary simple-minded collection.
\end{proof}

Let $A$ be a strictly unital minimal positive $A_\infty$-algebra over $K$. The projection $\varepsilon:A\to A^0=K$ makes $A$ an augmented $A_\infty$-algebra over $K$. We view $K$ as an $A_\infty$-module over $A$ via $\varepsilon$ and denote it by $S$. For $1\leq i\leq r$, we have a $1$-dimensional $A_\infty$-module $S_i=e_iA/e_i\ker(\varepsilon)$. Then $S=S_1\oplus\ldots\oplus S_r$. We call $S_1,\ldots,S_r$ the \emph{simple modules} over $A$. Let $M$ be a strictly unital $A_\infty$-module over $A$ which is concentrated in degree $0$. Then for all $m\in M$ we have $m_n^M(m\ten a_1\ten\cdots\ten a_{n-1})=0$ if  $a_1,\ldots,a_{n-1}$ are homogenous and one of them belongs to $\ker(\varepsilon)$. Indeed, we may assume that $m\in M^0$. If at least one of $a_1,\ldots,a_{n-1}$ belongs to $A^0$ and at least one of them belongs to $\ker(\varepsilon)$, then $m_n^M(m\ten a_1\ten\cdots\ten a_{n-1})=0$ because $M$ is strictly unital. If all $a_1,\ldots,a_{n-1}$ belong to $\ker(\varepsilon)$, then $m_n^M(m\ten a_1\ten\cdots\ten a_{n-1})$ is homogeneous of degree different from $0$ and has to be zero. So the $A_\infty$-module structure on $M$ factors through $\varepsilon$, so $M$ is the direct sum of copies of simple modules.

\begin{lemma}\label{l:simples-over-positive-A-infinity-algebras} Let $M$ be a strictly unital $A_\infty$-module over $A$. Fix $1\leq i\leq r$. If for $1\leq j\leq r$ and $p\in\mathbb{Z}$ we have
\[\Hom_{\cd(A)}(e_j A,\Sigma^p M)=\begin{cases} k & \text{ if } j=i \text{
and } p=0,\\
0 & \text{ otherwise,}
\end{cases}
\]
then $M$ is $A_\infty$-quasi-isomorphic to $S_i$.
\end{lemma}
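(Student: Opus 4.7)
The plan is to combine Lemma~\ref{l:morphism-from-projective}, the minimal model theorem (Proposition~\ref{p:minimal-model}), and the observation made in the paragraph immediately preceding the lemma statement about strictly unital $A_\infty$-modules concentrated in degree zero.

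First I would translate the hypothesis via Lemma~\ref{l:morphism-from-projective}, which yields an isomorphism $\Hom_{\cd(A)}(e_j A,\Sigma^p M) \cong H^p(M e_j)$. Thus the assumption becomes the cohomological statement that $H^p(M e_j)$ is one-dimensional when $(j,p)=(i,0)$ and vanishes otherwise. Since $e_1,\ldots,e_r$ are orthogonal strict idempotents of $A$ with $\sum_j e_j = 1 \in A^0$, there is a decomposition $M = \bigoplus_{j=1}^r M e_j$ of complexes of $K$-modules, so the total cohomology $H^\bullet(M)$ is one-dimensional, concentrated in degree $0$, and supported on $e_i$.

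Next I would apply Proposition~\ref{p:minimal-model} to replace $M$ by an $A_\infty$-quasi-isomorphic strictly unital minimal $A_\infty$-module $M'$. Since $m_1^{M'}=0$, the underlying graded $K$-module of $M'$ equals its total cohomology, so $M'$ is one-dimensional, concentrated in degree $0$, and its $K$-action is multiplication by $e_i$. It remains to identify the full $A_\infty$-module structure, and here I would invoke verbatim the argument already given in the paragraph preceding the lemma: for a strictly unital $A_\infty$-module concentrated in degree $0$, strict unitality kills $m_n^{M'}(m \otimes a_1 \otimes \cdots \otimes a_{n-1})$ whenever some $a_s \in A^0 = K$, while for $a_1,\ldots,a_{n-1} \in \ker(\varepsilon)$ the output lies in a nonzero degree and must therefore vanish. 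Hence the $A_\infty$-module structure on $M'$ factors through $\varepsilon:A\to K$, and matching the idempotent support forces $M' \cong S_i$.

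I do not expect a genuine obstacle; the content is just the correct chaining of previously established facts. The only point requiring care is verifying that the decomposition $M = \bigoplus_j M e_j$ is a decomposition of complexes (which follows from $m_1(m e_j) = m_1(m) e_j$, a consequence of $m_1(e_j)=0$ via the identity \eqref{e:associativity-for-modules} for $n=2$) and that the degree-concentration argument really does trivialize all higher multiplications so that the identification with $S_i$, rather than some a priori twisted one-dimensional module, is forced.
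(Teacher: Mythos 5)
Your proof is correct and follows essentially the same route as the paper's: translate the Hom condition into $H^p(Me_j)$ via Lemma~\ref{l:morphism-from-projective}, pass to a minimal model via Proposition~\ref{p:minimal-model} to get a one-dimensional $M'$ concentrated in degree $0$ supported on $e_i$, and conclude $M'\cong S_i$. You spell out two details the paper leaves implicit (that $M=\bigoplus_j Me_j$ is a decomposition of complexes, and the invocation of the preceding paragraph to see that the $A_\infty$-structure on $M'$ factors through $\varepsilon$), but the argument is the same one.
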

\begin{proof}
By Lemma~\ref{l:morphism-from-projective}, we have
\[
H^p(Me_j)=\begin{cases} k & \text{ if } j=i \text{
and } p=0,\\
0 & \text{ otherwise.}
\end{cases}
\]
So by Proposition~\ref{p:minimal-model}, $M$ is $A_\infty$-quasi-isomorphic to a strictly unital $A_\infty$-module $M'$, which is $1$-dimensional and concentrated in degree $0$. Moreover $M'e_i=k$ and $M'e_j=0$ for $j\neq i$. So $M'$ is isomorphic to $S_i$.
\end{proof}

The first statement of the following lemma, phrased in terms of dg
algebras and dg modules, can be obtained by combining~\cite[Lemma 5.2]{KellerNicolas13} and \cite[Step 6 of the proof of Lemma 4.8]{AdachiMizunoYang17}.

\begin{lemma}\label{l:positive-a-infty-and-silting} The $A_\infty$-module $S$ is a silting object in
$\cd_{fd}(A)$\footnote{At first sight this statement may seem surprising. There are two typical situations: (1) the dg algebra/$A_\infty$-algebra is non-positive and the extensions between simple modules are in positive degrees, see the last paragraph of Section~\ref{ss:smc}; (2) the dg algebra/$A_\infty$-algebra is positive (this means that there are extensions in positive degrees between the `projective modules'! See Lemma 4.1) and the extensions between simple modules are in non-positive degrees.  Morally the first-order extensions between simple modules are dual to generators up to a degree shift. For example take the graded algebra $A=k[x]$ and the simple module $S=A/(x)$. Then apart from the scalar endomorphisms there are $1$-dimensional self-extensions of $S$, say with basis $y$. Then $|y|=1-|x|$. In particular, if the degree of $x$ changes from $-\infty$ to $+\infty$, then the degree of $y$ changes from $+\infty$ to $-\infty$  (as if there is a mirror at $\frac{1}{2}$).}. Moreover, if $A$ is Koszul as a graded algebra (see for example~\cite{BeilinsonGinzburgSoergel96} for a
definition), then
$S$ is a tilting object in $\cd_{fd}(A)$.
\end{lemma}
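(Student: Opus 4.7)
The argument for silting splits into (a) the vanishing $\Hom_{\cd(A)}(S,\Sigma^p S)=0$ for $p>0$ and (b) the generation $\cd_{fd}(A)=\thick(S)$, and the Koszul hypothesis then upgrades (a) to the two-sided vanishing required for tilting.

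For (a), I would use the reduced bar construction to build a $K$-projective resolution $P=B(A)\otimes_K A\to S$, interpreted inside an enveloping dg model of $A$ from Theorem~\ref{t:enveloping-algebra} so that $K$-projectivity has its usual meaning. The $n$-th piece $\bar A^{\otimes n}\otimes A$ sits at external cohomological position $-n$, so a free generator $[a_1|\cdots|a_n]\otimes 1$ has total cohomological degree $|a_1|+\cdots+|a_n|-n$. Since $A$ is positive with $A^0=K$, each $a_i\in\bar A$ satisfies $|a_i|\geq 1$, and hence $P$ is concentrated in cohomological degrees $\geq 0$. A degree-$0$ graded $A$-linear map $P\to\Sigma^p S_j$ would send $P^d$ into $(\Sigma^p S_j)^d=S_j^{d+p}$, forcing $d=-p$; for $p>0$ no such $d\geq 0$ exists, so $\cHom_A(P,\Sigma^p S_j)^0=0$ and (a) follows.

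For (b), given $M\in\cd_{fd}(A)$ I replace it via Proposition~\ref{p:minimal-model} by a strictly unital minimal $A_\infty$-model $M'$; since $m_1^{M'}=0$ we have $M'\cong H^*(M)$, which is in particular finite-dimensional. The graded truncations $M'^{\geq p}=\bigoplus_{q\geq p}M'^q$ form a finite descending filtration of $M'$. Stability of $M'^{\geq p}$ under every $m_n^{M'}$ is a degree count: minimality kills $m_1$; for $n=2$ the $A^0=K$ summand of $A$ gives the $K$-action while the $\bar A$-summand strictly raises degree; for $n\geq 3$ strict unitality forces every input into $\bar A$, so the output has degree $\geq|m|+1$. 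The successive quotients therefore inherit only the $K$-action and are direct sums of shifts of simples $S_i$, hence lie in $\thick(S)$; a finite induction then yields $M\cong M'\in\thick(S)$.

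For the Koszul upgrade, replace $P$ by the linear Koszul resolution $K^\bullet=\bigoplus_n (A^!)^n\otimes_K A$, in which $(A^!)^n\subseteq(A^1)^{\otimes n}$ sits in internal degree $n$ and is placed at external position $-n$. Now every free generator $\xi\otimes 1$ lies in cohomological degree \emph{exactly} $0$, not merely $\geq 0$. Rerunning the argument of (a), a degree-$0$ $A$-linear map $K^\bullet\to\Sigma^p S_j$ is determined by its values on these generators, which must lie in $S_j^{-p}$; this space vanishes whenever $p\neq 0$, giving $\Hom_{\cd(A)}(S,\Sigma^p S)=0$ for all $p\neq 0$ and hence tilting. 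The main obstacle I anticipate is the simultaneous bookkeeping of two gradings (the internal one on $A$ versus the external one on the resolution) together with the $A_\infty$-signs; working inside an enveloping dg model, where both the bar and the Koszul complex become honest semi-free resolutions, should make the identifications unambiguous.
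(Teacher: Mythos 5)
Your proof is correct and runs along the same lines as the paper's, with two minor but genuinely different implementation choices. For the Ext-vanishing, the paper does not build an explicit resolution: it takes a graded-projective resolution of $S$ over the graded algebra $A=H^*(U)$ with $P^m\in\Add(A\langle m'\rangle\mid m'\leq m)$ and invokes Keller's Theorem 3.1(c) to lift this to a semi-free dg $U$-module $P$ carrying a filtration $(F_p)$ with $F_p/F_{p-1}\in\Add(\Sigma^m U\mid m\leq 0)$; the vanishing $\cHom_U(P,S)^m=0$ for $m>0$ then drops out of (F1)--(F3). Your bar resolution $B(U)\otimes_\tau U$ achieves the same thing more explicitly: the key point in both cases is that the free generators of the resolution live in total degree $\geq 0$ because $A$ (equivalently $U$, after passing to the enveloping dg model) is positive with semisimple degree-$0$ part. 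For generation the paper does not filter $M'$ by $M'^{\geq p}$; it shifts so that $M'$ lives in degrees $\geq 0$, splits off the bottom slice via the triangle $M'^{>0}\to M'\to\bar M\to\Sigma M'^{>0}$, identifies $\bar M$ with a sum of simples, and inducts on $\dim M'$ — equivalent to your filtration argument but packaged as a dimension induction. Your degree count showing that each $M'^{\geq p}$ is stable under all $m_n^{M'}$ is exactly the observation the paper uses to show $M'^{>0}$ is a submodule. For the Koszul upgrade both arguments replace the general resolution by a linear one; note that to make the Koszul complex an honest semi-free dg $U$-module you still need the lifting step (Keller's Theorem 3.1(c) or an explicit twisted tensor product), which you acknowledge but do not spell out — this is precisely where the paper's (F1)--(F3) mechanism does the bookkeeping you were worried about, and with the linear resolution it yields $F_p/F_{p-1}\in\Add(U)$, giving the two-sided vanishing.
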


\begin{proof}
We first show that
$\Hom_{\cd(A)}(S,\Sigma^m S)=0$
for $m>0$.
Take the enveloping algebra $U$ of $A$  as in Theorem \ref{t:enveloping-algebra}. Then $U$ is an augmented dg algebra over $K$ and there is a strictly unital $A_\infty$-quasi-isomorphism $A\to U$ of augmented $A_\infty$-algebras over $K$. So $S$ can be viewed as a dg $U$-module and the $A_\infty$-structure on $S$ over $A$ factors through the $A_\infty$-quasi-isomorphism $A\to U$. By Theorem \ref{t:quasi-isomorphism-induce-derived-equivalence}, there is a triangle equivalence $\cd(U) \to \cd(A)$ which sends $S$ to $S$. So we only need to show that $\Hom_{\cd(U)}(S,\Sigma^m S)=0$ for $m >0$. We view $S$ as a graded module over the graded algebra $H^*(U)$, which is identified with $A$ viewed as a graded algebra. $S$ admits a projective resolution over the graded algebra $A$
\[
\xymatrix@C=1pc{\ldots\ar[r]& P^m\ar[r] & P^{m+1}\ar[r]&\ldots\ar[r]& P^{-1}\ar[r] & P^0}
\]
such that $P^m\in\Add(A\langle m'\rangle\mid m'\leq m)$, where $\langle 1 \rangle$ denotes the degree shift.   According to \cite[Theorem 3.1 (c)]{Keller94}, there is a dg module $P$ over $U$ which is quasi-isomorphic to $S$ and which admits a filtration
\[
    0=F_{-1} \subset F_{0} \subset \ldots \subset F_{p} \subset F_{p+1} \subset \ldots \subset P, ~~p \in \mathbb{N}
\]
such that
\begin{itemize}
\item[(F1)] $P$ is the union of the $F_{p}, ~~p \in \mathbb{N}$.
\item[(F2)] $\forall p \in \mathbb{N}$, the inclusion morphism $F_{p-1} \subset F_{p}$ splits in the category $\Grmod U$ of graded modules over $U$, which is considered as a graded algebra by forgetting the differential.
\item[(F3)] $\forall p\in\mathbb{N}$, $F_{p}/F_{p-1} \in \Add(\Sigma^{m} A\mid m \leq 0)$.
\end{itemize}

\smallskip
By (F1) and (F2), we have an isomorphism $P \cong \bigoplus_{p\geq 0} F_{p}/F_{p-1}$ in $\Grmod U$. So as a graded vector space $\cHom_{U}(P,S)=  \prod_{p \geq 0} \cHom_{U}(F_{p}/F_{p-1},S)$, which is concentrated in non-positive degrees by (F3), since $\cHom_U(\Sigma^m U,S)=\Sigma^{-m}S$. As a consequence, we obtain that for $m>0$
\[
\Hom_{\cd(U)}(S,\Sigma^m S) = \Hom_{\cd(U)}(P,\Sigma^m S) = H^{m}\cHom_{U}(P,S)=0.
\]

\smallskip
Next we show that $\cd_{fd}(A)=\thick(S)$. By Proposition~\ref{p:minimal-model}, it suffices to prove that
if a strictly unital minimal $A_\infty$-module $M$
over $A$ satisfies that $\mathrm{dim}(M):=\bigoplus_{m\in\mathbb{Z}}M^m$ is finite-dimensional, then $M\in \thick(S)$. Up to shift we may assume that
$M^m=0$ for all $m<0$ and $M^0\neq 0$. Define
$M^{>0}=\bigoplus_{m>0} M^m$.
Then $M^{>0}$ is an
submodule of $M$. Let $\iota: M^{>0}\to M$ be the embedding and form a triangle in $\cd(A)$
\[
\xymatrix{M^{>0}\ar[r]^\iota & M\ar[r] & \bar{M}\ar[r] & \Sigma M^{>0}.}
\]
Here $\bar{M}$ is assumed to be minimal. Looking at the long exact sequence of cohomologies, we see that $\bar{M}$ is
concentrated in degree $0$, and hence is a finite direct sum of copies of $S_1,\ldots,S_r$. Now by induction on $\dim(M)$ we
finish the proof.

\smallskip
Finally,  assume that $A$ is Koszul. Then the above resolution of $S$ can be chosen such that $P^m\in\Add(A\langle m\rangle)$. Consequently, $F_{p}/F_{p-1} \in \Add(U)$ and $\cHom_{U}(P,S)$ is concentrated in degree $0$. It follows that $\Hom_{\cd(U)}(S,\Sigma^m S)=0$ for $m \neq 0$.
\end{proof}

\section{Constructing silting objects from simples-minded collections}\label{s:construction}

In this section we will use Koszul duality to construct a silting object in the perfect derived category of a finite-dimensional non-positive dg algebra from a given simple-minded collection in the finite-dimensional derived category .
\medskip

Let $A$ be a
non-positive dg $k$-algebra whose total cohomology is finite-dimensional over $k$. 

\subsection{The standard simple-minded collection}
Note that $H^0(A)$ is a finite-dimensional algebra. Let $S_1,\ldots,S_r$ be a complete set of pairwise
non-isomorphic simple $H^0(A)$-modules and view them as dg $A$-modules via the homomorphism $A\to H^0(A)$. Recall that $\{S_1,\ldots,S_r\}$ is a simple-minded collection in $\cd_{fd}(A)$. We assume further that $\End_{H^0(A)}(S_i)\cong k$ for all $1\leq i\leq r$. Then $\{S_1,\ldots,S_r\}$ is an elementary simple-minded collection in $\cd_{fd}(A)$. We will see in the proof of Theorem~\ref{t:main-thm-for-negative-dg} that every elementary simple-minded collection in $\cd_{fd}(A)$ is of this form up to derived equivalence.

Since $\End_{\cd(A)}(A)=H^0(A)$, it follows that the functor $H^0=\Hom_{\cd(A)}(A,?)$ restricts to an equivalence $\add_{\cd(A)}(A)\stackrel{\simeq}{\to}\proj H^0(A)$. Therefore there are indecomposable objects $P_1,\ldots,P_r\in\add_{\cd(A)}(A)\subseteq\per(A)$ such that $H^0(P_1),\ldots,H^0(P_r)$ are projective covers of $S_1,\ldots,S_r$, respectively. In particular, there are positive integers $a_1,\ldots,a_r$ such that  $A\cong \bigoplus_{i=1}^r P_i^{\oplus a_i}$ in $\cd(A)$. Moreover, for $p\neq 0$, the space $\Hom_{\cd(A)}(P_i,\Sigma^p S_j)$ vanishes as it is a direct summand of $\Hom_{\cd(A)}(A,\Sigma^p S_j)=H^p(S_j)=0$. For $p=0$, consider the triangle
\begin{align}\label{eq:standard-triangle-for-projectives}
\sigma^{\leq -1}(P_i)\to P_i\to H^0(P_i)\to \Sigma\sigma^{\leq -1}(P_i),
\end{align}
where $\sigma^{\leq -1}$ is the standard truncation of complexes at degree $-1$.
Applying $\Hom_{\cd(A)}(?,S_j)$ to this triangle, we obtain a long exact sequence
\[
\Hom_{\cd(A)}(\Sigma\sigma^{\leq -1}(P_i),S_j)\to \Hom_{\cd(A)}(H^0(P_i),S_j)\to \Hom_{\cd(A)}(P_i,S_j)\to \Hom_{\cd(A)}(\sigma^{\leq -1}(P_i),S_j)
\]
By \cite[Theorem A.1 (b)]{BruestleYang13}, the two outer terms vanish and $\Hom_{\cd(A)}(H^0(P_i),S_j)$ is isomorphic to $\Hom_{H^0(A)}(H^0(P_i),S_j)$. So $\Hom_{\cd(A)}(P_i,S_j)\cong \Hom_{H^0(A)}(H^0(P_i),S_j)$ vanishes if $i\neq j$ and is isomorphic to $k$ if $i=j$. To sum up, we have 
\[\Hom_{\cd(A)}(P_i,\Sigma^p S_j)=\begin{cases} k & \text{ if } i=j \text{
and } p=0,\\
0 & \text{ otherwise.}
\end{cases}
\]
Further, the collection $\{S_1,\ldots,S_r\}$ is determined by this property. Namely, fix $1\leq j\leq r$ and let $M\in\cd(A)$ be such that 
\[\Hom_{\cd(A)}(P_i,\Sigma^p M)=\begin{cases} k & \text{ if } i=j \text{
and } p=0,\\
0 & \text{ otherwise,}
\end{cases}
\]
then $M\cong S_j$ in $\cd(A)$. Indeed, this property implies that $H^p(M)\cong \Hom_{\cd(A)}(A,\Sigma^p M)$ is trivial unless $p=0$, so $M$ is isomorphic in $\cd(A)$ to $H^0(M)$, which is a dg $A$-module via the homomorphism $A\to H^0(A)$. Moreover, applying $\Hom_{\cd(A)}(?,H^0(M))$ to the triangle \eqref{eq:standard-triangle-for-projectives} we obtain
\[\Hom_{H^0(A)}(H^0(P_i),H^0(M))\cong\Hom_{\cd(A)}(P_i,H^0(M))=\begin{cases} k & \text{ if } i=j,\\
0 & \text{ otherwise.}
\end{cases}
\]
It follows that $H^0(M)\cong S_j$ in $\mod H^0(A)$. Therefore $M\cong S_j$ in $\cd(A)$.

Let $I_i=\nu(P_i)$ for $1\leq i\leq r$. Then $D({}_AA)=\bigoplus_{i=1}^r I_i^{\oplus a_i}$ and by the Auslander--Reiten formula we have
\[\Hom_{\cd(A)}(S_i,\Sigma^p I_j)=\begin{cases} k & \text{ if } i=j \text{
and } p=0,\\
0 & \text{ otherwise.}
\end{cases}
\]

\newcommand{\rad}{\mathrm{rad}}
Let $K$ be the direct sum of $r$ copies of $k$ and let  $e_1,\ldots,e_r$ be the standard basis of $K$ over $k$. By Lemma~\ref{l:a-infinite-alg-and-simple-minded} (b), there is a strictly unital minimal
positive $A_\infty$-algebra $\cs$ over $K$ such that as a graded algebra
\[\cs=\bigoplus_{p\in\mathbb{Z}}\Hom_{\cd_{fd}(A)}(\bigoplus_{i=1}^r S_i,\Sigma^p \bigoplus_{i=1}^r S_j)
\]
and there is a triangle equivalence
\[
\xymatrix{\Phi:\cd_{fd}(A) \ar[r] & \per(\cs)}
\]
taking $S_i$ ($1\leq i\leq r$) to
$e_i\cs$.
Therefore we have
\[
\Hom_{\cd(\cs)}(e_i \cs,\Sigma^p \Phi(I_j))=\begin{cases} k & \text{ if } i=j \text{
and } p=0,\\
0 & \text{ otherwise.}
\end{cases}
\]
By Lemma~\ref{l:simples-over-positive-A-infinity-algebras}, $\Phi(I_1),\ldots,\Phi(I_r)$ are, up to $A_\infty$-quasi-isomorphism, precisely the
simple modules over $\cs$. In other words, the
equivalence $\Phi$ restricts to a triangle equivalence
\[\xymatrix{\Phi|:\thick_{\cd(A)}(D({}_AA))=\thick_{\cd(A)}(I_1,\ldots,I_r)\ar[r] & \thick_{\cd(\cs)}(\Phi(I_1),\ldots,\Phi(I_r))=\cd_{fd}(\cs),}\]
where the last equality follows from
Lemma~\ref{l:positive-a-infty-and-silting}. It follows that $\cd_{fd}(\cs)\subseteq \per(\cs)$.

\subsection{Construction of the silting object}

Let $\{X_1,\ldots,X_r\}\subseteq\mathcal{D}_{fd}(A)$ be an elementary
simple-minded collection.   Let $Y_i=\Phi(X_i)$ for $1\leq i\leq r$. Then $\{Y_1,\ldots,Y_r\}$ is an elementary simple-minded collection in $\per(\cs)$. By Theorem~\ref{t:morita-for-der-cat}, there is a strictly unital minimal
positive $A_\infty$-algebra $\cx$ over $K$ such that as a graded algebra
\[\cx=\bigoplus_{p\in\mathbb{Z}}\Hom_{\cd_{fd}(A)}(\bigoplus_{i=1}^r Y_i,\Sigma^p \bigoplus_{i=1}^r Y_i)
\]
and there is  a triangle equivalence
\[
\xymatrix{\tilde{\Psi}:\cd(\cs) \ar[r] & \cd
(\cx)}
\]
taking $Y_i$ ($1\leq i\leq r$) to
$e_i\cx$. By Lemma~\ref{l:der-equiv-restricts-to-per-and-fd}, $\tilde{\Psi}$ restricts to
triangle equivalences
\[\Psi:\per(\cs)\longrightarrow \per(\cx),\]
\[\Psi|:\cd_{fd}(\cs)\longrightarrow\cd_{fd}(\cx).\]
This implies that $\cd_{fd}(\cx)\subseteq \per(\cx)$. So we have the following commutative diagram
\[
\xymatrix{\cd_{fd}(A) \ar[r]^{\Phi} & \per(\cs)\ar[r]^{\Psi} & \per(\cx)\\
\thick(D({}_AA))\ar@{^{(}->}[u] \ar[r]^{\Phi|} & \cd_{fd}(\cs)\ar[r]^{\Psi|} \ar@{^{(}->}[u]& \cd_{fd}(\cx)\ar@{^{(}->}[u]}
\]

Let $R_1,\ldots,R_r$ be the
 simple modules over $\cx$, and let $T_1,\ldots,T_r$ be
their images under a quasi-inverse of the equivalence
$(\Psi\circ \Phi)|$. Put $T=\bigoplus_{i=1}^r T_i$.

\begin{proposition}\label{p:new-construction}
\begin{itemize}
\item[(a)] $T$ is a silting object of $\thick(D({}_AA))$.
\item[(b)] For $1\leq i,j\leq r$, and
$p\in\mathbb{Z}$,
\[\Hom_{\cd_{fd}(A)}(X_j,\Sigma^p T_i)=\begin{cases} k & \text{if }i=j\text{ and } p=0,\\
                                           0 & \text{otherwise}.
                                           \end{cases}\]
\item[(c)] $\nu^{-1}T$ is a silting object of $\per(A)$.
\item[(d)] For $1\leq i,j\leq r$, and
$m\in\mathbb{Z}$,
\[\Hom_{\cd_{fd}(A)}(\nu^{-1}T_i,\Sigma^p X_j)=\begin{cases} k & \text{if }i=j\text{ and } p=0,\\
                                           0 & \text{otherwise}.
                                           \end{cases}\]

\end{itemize}
\end{proposition}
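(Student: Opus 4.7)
The plan is to exploit that $T$ is defined as the transport of the simple modules $R_1,\ldots,R_r$ under the composite equivalence $(\Psi\circ\Phi)|$, together with the silting property of the sum of simples for a positive $A_\infty$-algebra (Lemma~\ref{l:positive-a-infty-and-silting}), and the Auslander--Reiten duality recalled in Subsection~\ref{ss:non-positive-dg-alg}. Each of (a)--(d) then reduces to a short transport argument.

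For (a), I would apply Lemma~\ref{l:positive-a-infty-and-silting} to the positive $A_\infty$-algebra $\cx$, which gives that $R_1\oplus\ldots\oplus R_r$ is a silting object of $\cd_{fd}(\cx)$. Since $(\Psi\circ\Phi)|:\thick(D({}_AA))\to\cd_{fd}(\cx)$ is a triangle equivalence and being silting is a property preserved by triangle equivalences, it follows that $T=\bigoplus_{i=1}^r T_i$ is silting in $\thick(D({}_AA))$. For (c), recall from Subsection~\ref{ss:non-positive-dg-alg} that the Nakayama functor $\nu$ restricts to a triangle equivalence $\per(A)\to\thick(D({}_AA))$; transporting $T$ back yields that $\nu^{-1}T$ is silting in $\per(A)$.

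For (b), the computation is carried out inside $\per(\cx)$: the equivalences $\Phi$ and $\Psi$ take $X_j$ to $Y_j$ and then to $e_j\cx$, and take $T_i$ (via $\Phi|$ and $\Psi|$) to $R_i$. Hence
\[
\Hom_{\cd_{fd}(A)}(X_j,\Sigma^p T_i)\cong\Hom_{\cd(\cx)}(e_j\cx,\Sigma^p R_i).
\]
By Lemma~\ref{l:morphism-from-projective} this equals $H^p(R_i e_j)$, and because $R_i$ is the $1$-dimensional simple module concentrated in degree $0$ supported at $e_i$, the right-hand side is $k$ precisely when $i=j$ and $p=0$, and vanishes otherwise.

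For (d), I would use the Auslander--Reiten formula $D\Hom(M,N)\cong\Hom(N,\nu M)$ valid for $M\in\per(A)$ and $N\in\cd(A)$, applied to $M=\nu^{-1}T_i$ and $N=\Sigma^p X_j$. This gives
\[
\Hom_{\cd_{fd}(A)}(\nu^{-1}T_i,\Sigma^p X_j)\cong D\Hom_{\cd_{fd}(A)}(\Sigma^p X_j,T_i)\cong D\Hom_{\cd_{fd}(A)}(X_j,\Sigma^{-p}T_i),
\]
and the conclusion follows from (b) by dualising. There is no real obstacle here: the whole point of the preceding sections (Koszul duality via $\Phi$, the Morita theorems, the positivity of $\cs$ and $\cx$, and the behaviour of the Nakayama functor) has been arranged precisely so that the proposition falls out by transport of structure; the one place where care is needed is to verify that the AR formula is applied with the correct variance and shift, which is purely bookkeeping.
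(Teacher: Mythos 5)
Your proof is correct and follows essentially the same route as the paper's: transport of the silting property of $R_1\oplus\cdots\oplus R_r$ along $(\Psi\circ\Phi)|$ and $\nu$ for (a) and (c), the computation $\Hom(e_j\cx,\Sigma^p R_i)\cong H^p(R_ie_j)$ via Lemma~\ref{l:morphism-from-projective} for (b), and the Auslander--Reiten formula for (d). The only difference is that you spell out the bookkeeping more explicitly than the paper does.
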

\begin{proof} (a) This is because $R_1\oplus\ldots\oplus R_r$ is a silting object of $\cd_{fd}(\cx)$ (Lemma~\ref{l:positive-a-infty-and-silting}) and $(\Psi\circ\Phi)|$ is a triangle equivalence.

(b) By Lemma~\ref{l:morphism-from-projective} we have
\[\Hom(e_j\cx,\Sigma^p R_i)=\begin{cases} k & \text{ if } i=j \text{
and } p=0,\\
0 & \text{ otherwise.}
\end{cases}
\]
The desired formula follows immediately because $\Psi\circ\Phi$ is a triangle equivalence.

(c) follows from (a) because $\nu:\per(A)\to \thick(D({}_AA))$ is a triangle equivalence.

(d) follows from (b) and the Auslander--Reiten formula.
\end{proof}

If $A$ is a finite-dimensional elementary (ordinary) $k$-algebra, then this is \cite[Lemmas 5.6, 5.7 and 5.8 and Proposition 5.9]{KoenigYang14}, up to the hypothesis that if $\cd^b(\mod A)=\cd_{fd}(A)$ has an elementary simple-minded collection, then all simple-minded collections in $\cd^b(\mod A)$ are elementary.
Compared with Rickard's construction \cite{Rickard02} used in \cite{KoenigYang14}, our new approach has the disadvantage that it may fail for non-elementary simple-minded collections, but it also has some advantages.  By  Lemma~\ref{l:positive-a-infty-and-silting}, we obtain a sufficient condition on
$\nu^{-1}T$ being a tilting object.

\begin{corollary}\label{c:sufficient-condition-for-T-to-be-tilting}
If $\cx$ as a graded algebra is
Koszul, then $\nu^{-1}T$ is a tilting object of $\per(A)$.
\end{corollary}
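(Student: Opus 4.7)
The plan is to upgrade the silting assertion in Proposition~\ref{p:new-construction}(c) into a tilting one by importing the Koszul hypothesis through the two triangle equivalences already used to define $T$. Since the silting property, that is, the vanishing of positive self-extensions, is in hand, all that remains is to verify $\Hom_{\cd(A)}(\nu^{-1}T,\Sigma^p\nu^{-1}T)=0$ for every $p<0$.

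First I would invoke the second half of Lemma~\ref{l:positive-a-infty-and-silting}: since $\cx$ is a strictly unital minimal positive $A_\infty$-algebra over $K$ that is Koszul as a graded algebra, the direct sum $R:=R_1\oplus\cdots\oplus R_r$ of simple $\cx$-modules is in fact a tilting object of $\cd_{fd}(\cx)$. In particular $\Hom_{\cd(\cx)}(R,\Sigma^p R)=0$ for every $p\neq 0$, and $\thick(R)=\cd_{fd}(\cx)$.

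Next I would transport this tilting property back through the triangle equivalence $(\Psi\circ\Phi)|\colon\thick(D({}_AA))\to\cd_{fd}(\cx)$ set up in the diagram preceding Proposition~\ref{p:new-construction}; by construction this equivalence sends $T_i$ to $R_i$, so $T=\bigoplus_{i=1}^rT_i$ is a tilting object of $\thick(D({}_AA))$. Finally, since the Nakayama functor furnishes a triangle equivalence $\nu\colon\per(A)\to\thick(D({}_AA))$, its quasi-inverse carries $T$ to a tilting object $\nu^{-1}T$ of $\per(A)$, which is the desired conclusion.

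I do not anticipate any genuine obstacle: the corollary is really just the combination of the Koszul criterion of Lemma~\ref{l:positive-a-infty-and-silting} with the two triangle equivalences already in place. The only point worth flagging is the observation that triangle equivalences automatically preserve vanishing of $\Hom$-spaces in both positive and negative degrees, so that the tilting property passes cleanly from $R\in\cd_{fd}(\cx)$ to $T\in\thick(D({}_AA))$ and then to $\nu^{-1}T\in\per(A)$ without any further input.
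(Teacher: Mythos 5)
Your argument is exactly the one the paper has in mind: it points to Lemma~\ref{l:positive-a-infty-and-silting} immediately before stating the corollary and leaves the transport along the triangle equivalences $(\Psi\circ\Phi)|$ and $\nu^{-1}$ implicit, which is precisely what you have spelled out. Correct, and essentially identical in approach.
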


By \cite{SuHao16}, if $\cx$ as a graded algebra is isomorphic to $kQ$, where $Q$ is a graded quiver with all arrows in positive degrees, then $\cd_{fd}(A)$ is triangle equivalent to $\per(kQ)$, where $kQ$ is considered as a dg algebra with trivial differential. If all arrows of $Q$ are in degree $1$, then it is known that $\per(kQ)$ is triangle equivalent to the radical-square-zero algebra $R$ associated to the opposite quiver $Q^{op}$, considered as an ungraded quiver, see for example \cite[Theorem 2.5]{ChenYang15}. Consequently,  $A$ is derived equivalent to $R$.

\medskip

Furthermore, the dg endomorphism algebra and the truncated dg endomorphism algebra $\tilde{\Gamma}$ of $\nu^{-1}T$ (see Section~\ref{ss:non-positive-dg-alg}) are
Koszul dual to the $A_\infty$-algebra $\cx$. So they can
be obtained, up to quasi-equivalence (in the sense of \cite[Section 7]{Keller94}), as the dual bar construction
of $\cx$:
the complete tensor algebra $B^{\#}\cs=\widehat{T}_K(D(\cx^{>0}[1]))$ of $D(\cx^{>0}[1])$ over $K$  (see~\cite{LuPalmieriWuZhang08,KalckYang16}), where $\cx^{>0}=\bigoplus_{p>0}\cx^p$. In other words, up to
quasi-equivalence the following diagram is commutative
\[\xymatrix{\{X_1,\ldots,X_r\}\ar@{|->}[r]\ar@{|->}[d] & \nu^{-1}T\ar@{|->}[d]\\
\cx\ar@{|->}[r]^{dual~~ bar}_{construction} & \tilde{\Gamma}.} \] In
general the $A_\infty$-structure on $\cx$ is hard to compute.
However, sometimes it is easy to obtain the $A_\infty$-structure on
the truncated part $\cx^{[0,2]}$, the $A_\infty$-algebra obtained from $\cx$ by modulo the elements of degree $\geq 3$.  We have
\[
\End_{\cd(A)}(\nu^{-1}T)=H^0(\cEnd_A(\nu^{-1}T))=H^0(B^{\#}\cx)=H^0(B^{\#}\cx^{[0,2]}).
\]
The quiver $Q$ of $\End_{\cd(A)}(\nu^{-1}T)$ is determined by the $K$-bimodule structure on $\cx^1$. Precisely, the set of vertices of $Q$ is $\{1,\ldots,r\}$, and the number of arrows from $i$ to $j$ is the dimension of $e_i\cx^1e_j$ over $k$. The relations of $\End_{\cd(A)}(\nu^{-1}T)$ are `dual' to the restrictions $m_n:(\cx^1)^{\ten_K n}\to\cx^2$ of the multiplications of $\cx$.

\begin{corollary}
If $\cx^1=0$, then as an algebra $\End_{\cd(A)}(\nu^{-1}T)$ is isomorphic to $K$.
\end{corollary}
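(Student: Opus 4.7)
The plan is to invoke the identification $\End_{\cd(A)}(\nu^{-1}T) = H^0(B^{\#}\cx) = H^0(\widehat{T}_K(D(\cx^{>0}[1])))$ recorded in the paragraph preceding the corollary, and then to read off the conclusion by a degree count on the complete dual bar construction. This reduces the problem to a purely bookkeeping exercise once the Koszul-dual description of the endomorphism algebra is taken for granted.

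Under the hypothesis $\cx^1=0$, one has $\cx^{>0}=\cx^{\geq 2}$, so the generating $K$-bimodule $D(\cx^{>0}[1])$ is concentrated in cohomological degrees $\leq -1$, because an element of $\cx^p$ contributes in degree $1-p$ with $p\geq 2$. Consequently any nonempty tensor product of such generators has total degree $\leq -1$, and the degree-$0$ component of $\widehat{T}_K(D(\cx^{>0}[1]))$ receives a contribution only from the empty tensor. Hence $(B^{\#}\cx)^0=K$.

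It then remains to observe that $K$ sits in $B^{\#}\cx$ as the base subalgebra, is annihilated by the differential (which is $K$-bilinear and vanishes on the unit $1$), and cannot be hit by any boundary for the same degree reason. Therefore $H^0(B^{\#}\cx)=K$ as $k$-algebras, which gives the desired isomorphism. No genuine obstacle is expected; as a sanity check one may also appeal to the Gabriel-quiver description in the paragraph just above the corollary, where the number of arrows from $i$ to $j$ equals $\dim_k e_i\cx^1 e_j=0$, so the quiver consists of $r$ isolated vertices and the basic algebra it defines is $K$.
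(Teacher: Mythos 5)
Your proposal is correct and follows the same approach the paper intends: it reads the conclusion off the Koszul-dual identification $\End_{\cd(A)}(\nu^{-1}T)\cong H^0(B^{\#}\cx)$ via the degree count on $\widehat{T}_K(D(\cx^{>0}[1]))$, which is exactly the content of the Gabriel-quiver description (vertices $\{1,\ldots,r\}$, arrows counted by $\dim_k e_i\cx^1 e_j$) stated just above the corollary. The sanity check you give at the end is in fact the paper's intended one-line argument.
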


The following example is taken from~\cite{Al-Nofayee09}. Let $A$ be the algebra given by the quiver
$\xymatrix{1\ar@<.7ex>[r]^{\alpha}&2\ar@<.7ex>[l]^{\beta}}$ with
relations $\alpha\beta=0=\beta\alpha$. Take $X_1=P_1$ and $X_2=\Sigma^{-1}S_1$.
The $\cx$ as a graded algebra is the path algebra of the graded quiver
\[\xymatrix{1\ar[r]^{\gamma} & 2\ar@(ur,dr)^\delta}\]
where $\gamma$ is of degree $1$ and $\delta$ is of degree $2$.
Simply because of lack of morphisms to multiply with, the
$A_\infty$-structure on $\cx^{[0,2]}$ is trivial.
The dual bar construction shows that $\End_{\cd(A)}(\nu^{-1}T)$ is
the path algebra of the ungraded quiver $\xymatrix{1& 2\ar[l]}$.

\subsection{Main results}

Now we state our main result, which is a consequence of Proposition~\ref{p:new-construction}. 

\begin{theorem}\label{t:from-smc-to-silting}
For an elementary simple-minded collection $\{X_1,\ldots,X_r\}$ of $\cd_{fd}(A)$, there exists a unique (up to isomorphism) silting object $M=M_1\oplus\ldots\oplus M_r$ of $\per(A)$ such that
for $1\leq i,j \leq r$ and
$p\in\mathbb{Z}$
\[\Hom_{\cd_{fd}(A)}(M_i,\Sigma^p X_j)=\begin{cases} k & \text{if }i=j\text{ and } p=0,\\
                                           0 & \text{otherwise.}
                                           \end{cases}\]
\end{theorem}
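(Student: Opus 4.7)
The plan is to derive existence directly from Proposition~\ref{p:new-construction}(c),(d), and to establish uniqueness by transporting the defining Hom condition through the composed triangle equivalence $\Psi\circ\Phi$ after invoking Auslander--Reiten duality.

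For existence, I would simply set $M_i=\nu^{-1}T_i$, with $T=T_1\oplus\ldots\oplus T_r$ as constructed immediately before Proposition~\ref{p:new-construction}. Part~(c) of that proposition asserts that $\nu^{-1}T$ is a silting object of $\per(A)$, while part~(d) delivers exactly the required Hom formula; nothing more needs to be said on this side.

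For uniqueness, let $M=M_1\oplus\ldots\oplus M_r$ be any silting object of $\per(A)$ satisfying the condition. Since each $M_i\in\per(A)$ and each $X_j\in\cd_{fd}(A)$, the Auslander--Reiten formula recalled in Subsection~\ref{ss:non-positive-dg-alg} rewrites the hypothesis as
\[
\Hom_{\cd(A)}(X_j,\Sigma^{q}\nu M_i)\;\cong\;D\Hom_{\cd(A)}(M_i,\Sigma^{-q}X_j)=\begin{cases} k & \text{if } i=j\text{ and }q=0,\\ 0 & \text{otherwise.}\end{cases}
\]
Since $\nu$ restricts to an equivalence $\per(A)\iso\thick(D({}_AA))\subseteq\cd_{fd}(A)$, the object $\nu M_i$ may be transported first via $\Phi|$ into $\cd_{fd}(\cs)$ and then via $\Psi|$ into $\cd_{fd}(\cx)$. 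Because $(\Psi\circ\Phi)(X_j)=e_j\cx$, the condition becomes
\[
\Hom_{\cd(\cx)}(e_j\cx,\Sigma^{q}(\Psi\Phi)(\nu M_i))=\begin{cases} k & \text{if } i=j\text{ and }q=0,\\ 0 & \text{otherwise.}\end{cases}
\]
Lemma~\ref{l:simples-over-positive-A-infinity-algebras} then identifies $(\Psi\Phi)(\nu M_i)$ with the simple $\cx$-module $R_i$ in $\cd(\cx)$; pulling back along the equivalence yields $\nu M_i\cong T_i$, and hence $M_i\cong\nu^{-1}T_i$, completing uniqueness.

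All the substance has already been invested in Proposition~\ref{p:new-construction} together with the Koszul-dual identification of simples from Lemma~\ref{l:simples-over-positive-A-infinity-algebras}, so I do not anticipate any serious obstacle. The only point requiring care is the bookkeeping across the categories $\per(A)$, $\cd_{fd}(A)$, $\cd_{fd}(\cs)$ and $\cd_{fd}(\cx)$ and their inclusions, in particular the verification that $\nu M_i$ lies in $\thick(D({}_AA))$ so that $\Phi|$ may be applied and the restricted equivalence $(\Psi\circ\Phi)|$ may be used to transport $R_i$ back to $T_i$.
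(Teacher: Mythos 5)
Your proposal is correct and follows essentially the same route as the paper: existence by taking $M=\nu^{-1}T$ from Proposition~\ref{p:new-construction}(c),(d), and uniqueness by applying the Auslander--Reiten formula to pass to $\nu M_i$, transporting via $\Psi\circ\Phi$, and invoking Lemma~\ref{l:simples-over-positive-A-infinity-algebras} to identify the image with the simple $R_i$. The extra remark about checking that $\nu M_i\in\thick(D({}_AA))$ is immediate since $\nu$ is an equivalence $\per(A)\iso\thick(D({}_AA))$.
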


\begin{proof} The existence of $M$ follows from Proposition~\ref{p:new-construction} (c)(d): take $M=\nu^{-1}T$. 
Let $N=N_1\oplus\ldots\oplus N_r$ be an object of $\per(A)$ such that
for $1\leq i,j\leq r$ and
$p\in\mathbb{Z}$
\[\Hom_{\cd_{fd}(A)}(N_i,\Sigma^p X_j)=\begin{cases} k & \text{if }i=j\text{ and } p=0,\\
                                           0 & \text{otherwise.}
                                           \end{cases}\]
Then by the Auslander--Reiten formula we have
\[\Hom_{\cd_{fd}(A)}(\Sigma^p X_j,\nu N_i)=\begin{cases} k & \text{if }i=j\text{ and } p=0,\\
                                           0 & \text{otherwise.}
                                           \end{cases}\]
Applying the triangle equivalence $\Psi\circ\Phi$ we obtain      
\[\Hom_{\cd_{fd}(A)}(\Sigma^p e_j\cx,\Psi\circ\Phi\circ\nu(N_i))=\begin{cases} k & \text{if }i=j\text{ and } p=0,\\
                                           0 & \text{otherwise.}
                                           \end{cases}\]
Therefore by Lemma~\ref{l:simples-over-positive-A-infinity-algebras} we have $\Psi\circ\Phi\circ\nu(N_i)\cong R_i$. So $N_i\cong \nu^{-1}\circ (\Psi\circ\Psi)^{-1}(R_i)=M_i$, showing the uniqueness of $M$.
\end{proof}

As a consequence, we obtain the following theorem, generalising \cite[Theorems 6.1 and 7.12]{KoenigYang14}. The assumption that $k$ is algebraically closed is required to ensure that all simple-minded collections are elementary, which is used only in the construction of the map from simple-minded collections to silting objects.

\begin{theorem}\label{t:main-thm-for-negative-dg}
Assume that $k$ is algebraically closed. Then there are one-to-one correspondences which commute with mutations and which preserve partial orders
between
\begin{itemize}
 \item[(1)] equivalence classes of silting objects in $\per(A)$,
 \item[(2)] isomorphism classes of simple-minded collections in $\cd_{fd}(A)$,
 \item[(3)] bounded $t$-structures of $\cd_{fd}(A)$ with length heart,
 \item[(4)] bounded co-$t$-structures of $\per(A)$.
\end{itemize}
\end{theorem}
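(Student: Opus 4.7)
The plan is to reduce the theorem to the new construction of Theorem~\ref{t:from-smc-to-silting} together with standard correspondences from the literature. The bijection (1) $\leftrightarrow$ (4) between silting objects and bounded co-$t$-structures of $\per(A)$ holds in any Hom-finite Krull--Schmidt triangulated category by the results of Aihara--Iyama, Bondarko and Mendoza--S\'aenz--Santiago--Souto; the bijection (2) $\leftrightarrow$ (3) between isomorphism classes of simple-minded collections and bounded $t$-structures with length heart on $\cd_{fd}(A)$ is due to Al-Nofayee and is recorded in \cite{KoenigYang14}. Both $\per(A)$ and $\cd_{fd}(A)$ are Hom-finite and Krull--Schmidt by Section~\ref{ss:non-positive-dg-alg}, so these standard arguments apply verbatim. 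It therefore remains to produce the central bijection (1) $\leftrightarrow$ (2).

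For (1) $\rightarrow$ (2) I would send a silting object $M \in \per(A)$ to the simple-minded collection $\sigma(M) \subseteq \cd_{fd}(A)$ obtained as follows: form $B = \sigma^{\leq 0}\cEnd_A(\mathbf{p}M)$ as in Section~\ref{ss:non-positive-dg-alg}, which is a non-positive dg algebra with finite-dimensional total cohomology; the resulting derived equivalence $\cd(B) \to \cd(A)$ sending $B$ to $M$ restricts to $\cd_{fd}(B) \to \cd_{fd}(A)$, and the image of the standard simple-minded collection of $\cd_{fd}(B)$ yields $\sigma(M)$. Since $k$ is algebraically closed, the endomorphism algebra of every simple $H^0(B)$-module is $k$, so $\sigma(M)$ is elementary. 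For (2) $\rightarrow$ (1) I would send an (automatically elementary) simple-minded collection $\mathbf{X} = \{X_1,\ldots,X_r\}$ to the silting object $\mu(\mathbf{X}) = M_1 \oplus \ldots \oplus M_r$ produced by Theorem~\ref{t:from-smc-to-silting}.

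The crux is that $\sigma$ and $\mu$ are mutually inverse up to equivalence/isomorphism, and the uniqueness clause of Theorem~\ref{t:from-smc-to-silting} does most of the work. For $\mu \circ \sigma \cong \mathrm{id}$, let $M$ be silting with $\mathbf{X} = \sigma(M)$; the Hom-vanishing relations between indecomposable summands of $M$ and the simples of $H^0(B)$ worked out in the opening of Section~\ref{s:construction} show that $M$ itself satisfies the defining property of $\mu(\mathbf{X})$ in Theorem~\ref{t:from-smc-to-silting}, so uniqueness forces $\mu(\mathbf{X}) \cong M$. For $\sigma \circ \mu \cong \mathrm{id}$, set $M = \mu(\mathbf{X})$ and $B' = \sigma^{\leq 0}\cEnd_A(\mathbf{p}M)$; the indecomposable summands $M_i$ of $M$ correspond to a complete set of indecomposable projectives over $H^0(B')$, and Proposition~\ref{p:new-construction}(d) is precisely the Hom-vanishing characterization of the standard simple modules recorded in Section 5.1. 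Consequently the transported simples of $B'$ agree with the $X_i$ up to isomorphism, giving $\sigma(\mu(\mathbf{X})) \cong \mathbf{X}$.

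Finally, to verify that all three bijections commute with mutations and preserve partial orders, I would proceed as in \cite[Sections 6 and 7]{KoenigYang14}. The partial orders on all four sides translate into Hom-vanishing of the form $\Hom(-, \Sigma^{>0}\,-) = 0$, and the bijection (1) $\leftrightarrow$ (2) preserves these by Proposition~\ref{p:new-construction}(d). For mutations, one checks that the defining triangles of silting mutation and simple-minded mutation induce each other through the Hom-vanishing characterization of Theorem~\ref{t:from-smc-to-silting}; the uniqueness of the silting object forces the mutation diagram to commute. The main obstacle is the identification $\sigma \circ \mu \cong \mathrm{id}$, which amounts to matching two instances of the ``simples characterized by Hom-vanishing against projectives'' statement across the Koszul dual picture; once this is handled, everything else is formal given Theorem~\ref{t:from-smc-to-silting}.
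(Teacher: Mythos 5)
Your proposal is correct and follows essentially the same route as the paper: the paper defers to the proof of \cite[Theorems 6.1 and 7.12]{KoenigYang14} and simply records the definitions of the correspondences, with the map from simple-minded collections to silting objects given by Theorem~\ref{t:from-smc-to-silting} and the reverse map given by passage to the truncated dg endomorphism algebra and transporting the standard simple-minded collection, exactly as you do. You go somewhat further than the paper in explicitly verifying that $\mu$ and $\sigma$ are mutual inverses via the uniqueness clause of Theorem~\ref{t:from-smc-to-silting} and the Hom-vanishing characterization of the standard simples from the opening of Section~\ref{s:construction}; that elaboration is correct and is the point at which the Koszul-duality construction replaces Rickard's in the cited argument, so it is worth spelling out.
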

\begin{proof}
The proof is the same as that of \cite[Theorems 6.1 and 7.12]{KoenigYang14}. Here we only give the definition of some of the correspondences, which are compatible with each other.

{\it From silting objects to simple-minded collections:} Let $M$ be a basic silting object in $\per(A)$.We may assume that $M$ is $K$-projective. Let $\tilde{\Gamma}$ be the truncated dg endomorphism algebra of $M$ (see Section~\ref{ss:non-positive-dg-alg}).  Then  $\tilde{\Gamma}$ is non-positive and has finite-dimensional total cohomology; moreover, there is a triangle equivalence $\cd(\tilde{\Gamma})\rightarrow \cd(A)$ taking $\tilde{\Gamma}$ to $M$. The simple-minded collection $\{X_1,\ldots,X_r\}$ corresponding to $M$ is the image of a complete collection of pairwise non-isomorphic simple $H^0(\tilde{\Gamma})$-modules (viewed as dg $\tilde{\Gamma}$-modules) under this equivalence. It is the unique collection (up to isomorphism) in $\cd_{fd}(A)$ satisfying for  $1\leq i,j\leq r$ and  $p\in\mathbb{Z}$
\begin{align*}
\Hom(M_i,\Sigma^p X_j)=\begin{cases} k & \text{ if } i=j \text{ and } p=0,\\ 0 & \text{ otherwise}.\end{cases}
\end{align*}

{\it From silting objects to t-structures:} Let $M$ be a silting object in $\per(A)$. The corresponding $t$-structure $(\cd^{\leq 0},\cd^{\geq 0})$ on $\cd_{fd}(A)$ is defined as
\begin{align*}
\cd^{\leq 0}&=\{X\in\cd_{fd}(A)\mid \Hom(M,\Sigma^p X)=0 \text{ for } p>0\},\\
\cd^{\geq 0}&=\{X\in\cd_{fd}(A)\mid\Hom(M,\Sigma^p X)=0\text{ for } p<0\}.
\end{align*}
This is the image of the standard $t$-structure (\cite[Theorem A.1]{BruestleYang13}) on $\cd_{fd}(\tilde{\Gamma})$ under the triangle equivalence $\cd_{fd}(\tilde{\Gamma})\to\cd_{fd}(A)$ (which is restricted from the triangle equivalence $\cd(\tilde{\Gamma})\to\cd(A)$). The heart of this t-structure is equivalent to $\mod \End(M)$.

{\it From silting objects to co-t-structures:} Let $M$ be a silting object in $\per(A)$. The corresponding co-$t$-structure $(\cp_{\geq 0},\cp_{\leq 0})$ on $\per(A)$ is defined as (see \cite[Section 3.4]{KoenigYang14})
\begin{align*}
\cp_{\geq 0} & = \text{the smallest full subcategory of $\per(A)$ which contains $\{\Sigma^p M\mid p\leq 0\}$}\\
& \text{and which is closed under taking extensions and direct summands},\\
\cp_{\leq 0} & = \text{the smallest full subcategory of $\per(A)$ which contains $\{\Sigma^p M\mid p\geq 0\}$}\\
& \text{and which is closed under taking extensions and direct summands}.
\end{align*}

{\it From t-structures to simple-minded collections:} Let $(\cd^{\leq 0},\cd^{\geq 0})$ be a bounded $t$-structure on $\cd_{fd}(A)$ with length heart. The corresponding simple-minded collection is a complete collection of pairwise non-isomorphic simple objects of the heart $\cd^{\leq 0}\cap\cd^{\geq 0}$ (see \cite[Section 3.3]{KoenigYang14}).

{\it From simple-minded collections to silting objects:} Let $\{X_1,\ldots,X_r\}$ be a simple-minded collection of $\cd_{fd}(A)$. It is elementary since the base field $k$ is algebraically closed. The corresponding silting object is the $M$ as in Theorem~\ref{t:from-smc-to-silting}.

{\it From co-t-structures to silting objects:} Let $(\cp_{\geq 0},\cp_{\leq 0})$ be a bounded co-$t$-structure on $\per(A)$. The corresponding silting object $M$ of $\per(A)$ is an additive generator of the co-heart, \ie $\add(M)=\cp_{\geq 0}\cap\cp_{\leq 0}$ (see \cite[Sections 3.1 and 3.4]{KoenigYang14}).
\end{proof}

\def\cprime{$'$}
\providecommand{\bysame}{\leavevmode\hbox to3em{\hrulefill}\thinspace}
\providecommand{\MR}{\relax\ifhmode\unskip\space\fi MR }
\providecommand{\MRhref}[2]{%
  \href{http://www.ams.org/mathscinet-getitem?mr=#1}{#2}
}
\providecommand{\href}[2]{#2}

\end{document}